\journal{Journal de Math\'ematiques Pures et Appliqu\'ees} 
\theoremstyle{plain}
\newtheorem{thm}{\protect\theoremname}
  \theoremstyle{remark}
  \newtheorem{rem}[thm]{\protect\remarkname}
  \theoremstyle{plain}
  \newtheorem{prop}[thm]{\protect\propositionname}
\newcommand{\1}{\mbox{1\hspace{-1mm}I}}
\numberwithin{equation}{section}
  \providecommand{\propositionname}{Proposition}
  \providecommand{\remarkname}{Remark}
\providecommand{\theoremname}{Theorem}
\begin{document}
\selectlanguage{american}%
\global\long\def\1{\mbox{1\hspace{-1mm}I}}

\selectlanguage{english}%

\begin{frontmatter}		
	
	\title{THE MASTER EQUATION IN MEAN FIELD THEORY}
	
	\author[AB]{Alain Bensoussan\fnref{A1}}
	\address[AB]{International Center for Decision and Risk Analysis\\Jindal School of Management, University of Texas at Dallas,\\College of Science and Engineering\\Systems Engineering and Engineering Management\\City University Hong Kong}
	\fntext[A1]{Corresponding author (Phone:+1 972-883-6117; Email:axb046100@utdallas.edu). 
	His research is supported
	by the National Science Foundation under grant DMS-1303775 and the
		Research Grants Council of the Hong Kong Special Administrative Region
		(CityU 500113). He also thanks P. Cardialaguet for providing his
		notes on the topic.}
	
	\author[JF]{Jens Frehse}
	\address[JF]{Institute for Applied Mathematics, University of Bonn}
	
	\author[PY]{Sheung Chi Phillip Yam\fnref{A3}}
	\address[PY]{Department of Statistics, The Chinese University of Hong Kong}
	\fntext[A3]{The third author acknowledges the financial supports from The Hong Kong RGC GRF 404012 with the project title: Advanced Topics In Multivariate Risk Management In Finance And Insurance, The Chinese University of Hong Kong Direct Grants 2011/2012 Project ID: 2060444.}
	
	\begin{abstract}
		In his lectures at {\it College de France}, P.L. Lions introduced the concept
		of Master equation, see \cite{PLL} for Mean Field Games. It is introduced
		in a heuristic fashion, from the prospective as a system of partial differential equations,
		that the equation is associated to a Nash equilibrium for a large, but finite, number of
		players. The method, also explained in \cite{PCA}, composed of a formalism of derivations. The interest of this equation is that it contains
		interesting particular cases, which can be studied directly, in particular
		the system of HJB-FP (Hamilton-Jacobi-Bellman, Fokker-Planck) equations
		obtained as the limit of the finite Nash equilibrium game, when the
		trajectories are independent, see \cite{LAL}. Usually, in mean field
		theory, one can bypass the large Nash equilibrium, by introducing
		the concept of representative agent, whose action is influenced by
		a distribution of similar agents, and obtains directly the system of
		HJB-FP equations of interest, see for instance \cite{BFY}. Apparently,
		there is no such approach for the Master equation. We show here that
		it is possible. We first do it for the Mean Field type control problem,
		for which we interpret completely the Master equation. For the Mean
		Field Games itself, we solve a related problem, and obtain again the
		Master equation.\\
		{\bf R\'esum\'e} 
		
		Dans son cours au Coll\`ege de France, P.L. Lions a inroduit le concept d'\'equation maitresse, pour les jeux \`a champ moyen \cite{PLL}.  Ceci a \'et\'e fait d'une mani\`ere heuristique, \`a partir du syst\`eme d'\'equations aux d\'eriv\'ees partielles associ\'e \`a un \'equilibre de Nash, pour un nombre fini, mais grand, de joueurs. La m\'ethode repose sur un formalisme de d\'erivations. L'int\'er\^et de cette \'equation maitresse est qu'elle contient des cas particuliers int\'eressants, qui peuvent \^etre \'etudi\'es dicretement, en particulier le syst\`eme des \'equations HJB-FP, Hamilton-Jacobi-Bellman \& Fokker Planck, obtenu comme la limite d'un \'equilibre de Nash, lorsque les trajectoires sont ind\'ependantes \cite{LAL}. De mani\`ere g\'en\'erale, dans la th\'eorie des jeux a champ moyen, on peut ne pas passer par l'\'equilibre de Nash pour un grand nombre de joueurs, en introduisant le concept d'agent repr\'esentatif, dont le comportement d\'epend d'une distribution d'agents similaires a l'agent repr\'esentatif \cite{BFY}. Une telle possibilit\'e n'avait pas \'et\'e mise en avant, pour l'\'equation maitresse. Nous montrons que c'est possible. Nouns le montrons d'abord pour les probl\`emes de contr\^ole de type champ moyen, et nous caract\'erisons compl\`etement l'\'equation maitresse. Pour les jeux champ moyen, nous r\'esolvons un probl\`eme relatif \`a ce cas, et obtenons \`a nouveau l'\'equation maitresse.
	\end{abstract}

	\begin{keyword}
		Master Equation\sep Mean Field Type Control Problems\sep Mean Field Games\sep Stochastic Maximum Principle\sep Stochastic HJB Equations\sep Linear Quadratic Problems
		
		\MSC[] 35B37 \sep 49N70 \sep 60H15 \sep 91A06
	\end{keyword}
	
\end{frontmatter}

\section{INTRODUCTION}

Since we do not intend to give complete proofs, we proceed formally, by
assuming relevant smoothness structure whenever it eases the argument. We consider functions
$f(x,m,v),\; g(x,m,v),\; h(x,m)$ and $\sigma(x)$ where $x\in \mathbb{R}^{n};$ $m$
is a probability measure on $\mathbb{R}^{n}$, but we shall retain ourselves mostly in
the regular case, in which $m$ represents the probability density,
assumed to be in $L^{2}(\mathbb{R}^{n})$ and $v$ is a control in $\mathbb{R}^{d}$. The functions $f$ and $h$ are scalar, $g$ is a vector in
$\mathbb{R}^{n}$ and $\sigma(x)$ is a $n\times n$ matrix. All these functions
are differentiable in all arguments. In the case of the differentiability with respect
to $m$, we use the concept of G\^ateaux differentiability. Indeed, $F:L^{2}(\mathbb{R}^{n})\rightarrow \mathbb{R}$ is said to be G\^ateaux differentiable if there uniquely exists $\frac{\partial F}{\partial m}(m)\in L^{2}(\mathbb{R}^{n})$, such that
\begin{equation*}
\frac{d}{d\theta}F(m+\theta\tilde{m})|_{\theta=0}=\int_{\mathbb{R}^{n}}\frac{\partial F}{\partial m}(m)(\xi)\tilde{m}(\xi)d\xi.
\end{equation*}
The second order derivative is a linear map from $L^{2}(\mathbb{R}^{n})$ into itself
, defined by 
\begin{equation*}
\frac{d}{d\theta}\frac{\partial F}{\partial m}(m+\theta\tilde{m})(\xi)|_{\theta=0}=\int_{\mathbb{R}^{n}}\frac{\partial^{2}F}{\partial m^{2}}(m)(\xi,\eta)\tilde{m}(\eta)d\eta.
\end{equation*}

We can state the second order Taylor's formula as

\[
F(m+\tilde{m})=F(m)+\int_{\mathbb{R}^{n}}\frac{\partial F}{\partial m}(m)(\xi)\tilde{m}(\xi)d\xi+\int_{\mathbb{R}^{n}}\int_{\mathbb{R}^{n}}\frac{\partial^{2}F}{\partial m^{2}}(m)(\xi,\eta)\tilde{m}(\xi)\tilde{m}(\eta)d\xi d\eta.
\]

Consider a probability space $(\Omega,\mathcal{A},\mathbb{P})$ on which various Wiener processes are
defined. We define first a standard Wiener
process $w(t)$ in $\mathbb{R}^{n}$. To avoid cumbersome of notation, if there is no ambiguity, we shall suppress the arguments of functions in the rest of this paper.

We first introduce the classical mean field type control problem in which the state dynamic is represented by the stochastic differential equation of McKean-Vlasov type:
\begin{equation}
\label{eq:1.1}
\left\{
\begin{split}
	&dx=g(x,m_{v(\cdot)},v(x))dt+\sigma(x)dw,\\
	&x(0)=x_{0};\\
\end{split}
\right. 
\end{equation}
in which $v(x)$ is a feedback and $m_{v(\cdot)}(x,t)$ is the probability density of the state $x(t)$. The initial value $x_{0}$ is a random variable independent of the Wiener process $w(\cdot)$. This density is well-defined provided that there is invertibility of $a(x)=\sigma(x)\sigma^{*}(x)$. We define the second order differential operator 

\[
\mathcal{A}\varphi(x)=-\frac{1}{2}\sum_{i,j}a_{ij}(x)\frac{\partial^{2}\varphi(x)}{\partial x_{i}\partial x_{j}}
\]
 and its adjoint 

\[
\mathcal{A}^{*}\varphi(x)=-\frac{1}{2}\sum_{i,j}\frac{\partial^{2}(a_{ij}(x)\varphi(x))}{\partial x_{i}\partial x_{j}}.
\]

The mean field type control problem is to minimize the cost functional
\begin{equation}
\label{eq:1.2}
J(v(\cdot))= \mathbb{E} \bigg[\int_{0}^{T}f(x(t),m_{v(\cdot)}(t),v(x(t)))dt+h(x(T),m_{v(\cdot)}(T))\bigg].
\end{equation}

In the classical mean field games problem, one should first fix $m(\cdot)\in C([0,T];L^{2}(\mathbb{R}^{n}))$ as a given parameter in the state equation 
\begin{equation}
\label{eq:1.3}
\left\{
\begin{array}{rcl}
	dx&=&g(x,m,v(x))dt+\sigma(x)dw,\\
	x(0) & =&x_{0},\\ 
\end{array}
\right.
\end{equation}
and the objective functional
\begin{equation}
\label{eq:1.4}
J(v(\cdot),m(\cdot))= \mathbb{E} \bigg[\int_{0}^{T}f(x(t),m(t),v(x(t)))dt+h(x(T),m(T))\bigg].
\end{equation}

The mean field games problem looks for an equilibrium $\hat{v}(\cdot),m(\cdot)$ such that
\begin{equation}
\left\{
\begin{array}{l}
J(\hat{v}(\cdot),m(\cdot)) \leq J(v(\cdot),m(\cdot)),\:\forall v(\cdot),\\
 m(t)  \text{ is the probability density of }\hat{x}(t),\,\forall t\in[0,T],\label{eq:1.5}
\end{array}
\right.
\end{equation}
where $\hat{x}(\cdot)$ is the solution of (\ref{eq:1.3}) corresponding to the equilibrium pair $\hat{v}(\cdot),m(\cdot)$.

\section{MASTER EQUATION FOR THE CLASSICAL CASE $ $}

We refer to problems (\ref{eq:1.1}),(\ref{eq:1.2}) and (\ref{eq:1.3}),(\ref{eq:1.4}),(\ref{eq:1.5})
as the classical case. We define the Hamiltonian $H(x,m,q):\, \mathbb{R}^{n}\times L^{2}(\mathbb{R}^{n})\times \mathbb{R}^{n}\rightarrow \mathbb{R}$
as 
\begin{equation*}
H(x,m,q)=\inf_{v}(f(x,m,v)+q\cdot g(x,m,v))
\end{equation*}
and the optimal value of $v$ is denoted by $\hat{v}(x,m,q).$ We
then set 
\begin{equation*}
G(x,m,q)=g(x,m,\hat{v}(x,m,q)).
\end{equation*}
\subsection{MEAN FIELD TYPE CONTROL PROBLEM}

The mean field type control problem is easily transformed into a control problem in which the state is a probability density process $m_{v(\cdot)}$, which satisfies the solution of the Fokker-Planck equation 
\begin{equation}
\left\{
\begin{split}
&\frac{\partial m_{v(\cdot)}}{\partial t}+\mathcal{A}^{*}m_{v(\cdot)}+\text{div }(g(x,m_{v(\cdot)},v(x))m_{v(\cdot)}(x))=0\label{eq:2.1},\\
&m_{v(\cdot)}(x,0)=m_{0}(x). 
\end{split}
\right.
\end{equation}

Here $m_{0}(x)$ is the density of the initial value
$x_{0}.$ The objective functional $J(v(\cdot))$ can be written as 
\begin{equation}
J(v(\cdot))=\int_{0}^{T}\int_{\mathbb{R}^{n}}f(x,m_{v(\cdot)}(t),v(x))m_{v(\cdot)}(x,t)dxdt+\int_{\mathbb{R}^{n}}h(x,m_{v(\cdot)}(T))m_{v(\cdot)}(x,T)dx\label{eq:2.2}
\end{equation}

We next use the traditional invariant embedding approach. Define a
family of control problems indexed by initial conditions $(m,t)$:
\begin{equation}
\label{eq:2.3}
\left\{
\begin{array}{l}
	\dfrac{\partial m_{v(\cdot)}}{\partial s}+\mathcal{A}^{*}m_{v(\cdot)}+\text{div }(g(x,m_{v(\cdot)},v(x))m_{v(\cdot)}(x))=0,\\
	m_v(x,t)=m(x);
\end{array}
\right. 
\end{equation}
\begin{equation}
J_{m,t}(v(\cdot))=\int_{t}^{T}\int_{\mathbb{R}^{n}}f(x,m_{v(\cdot)}(s),v(x))m_{v(\cdot)}(x,s)dxds+\int_{\mathbb{R}^{n}}h(x,m_{v(\cdot)}(T))m_{v(\cdot)}(x,T)dx,\label{eq:2.4}
\end{equation}
and we set 
\begin{equation}
V(m,t)=\inf_{v(\cdot)}J_{m,t}(v(\cdot))\label{eq:2.5}.
\end{equation}

We can then write the Dynamic Programming equation satisfied by $V(m,t).$
By standard arguments, one obtains \footnote{This equation has also been obtained, independently, by M. Lauriere
and O. Pironneau \cite{LPI} %
} 
\begin{equation}
\label{eq:2.6}
\left\{
\begin{array}{l}
	\dfrac{\partial V}{\partial t}-\displaystyle\int_{\mathbb{R}^{n}}\dfrac{\partial V(m)}{\partial m}(\xi)\mathcal{A}^{*}m(\xi)d\xi\\
	\qquad+\displaystyle\inf_{v}\left(\displaystyle\int_{\mathbb{R}^{n}}f(\xi,m,v(\xi))m(\xi)d\xi\right.\left.-\displaystyle\int_{\mathbb{R}^{n}}\dfrac{\partial V(m)}{\partial m}(\xi)\text{div }(g(\xi,m,v(\xi))m(\xi))d\xi\right)=0,\\
	V(m,T)=\int_{\mathbb{R}^{n}}h(x,m)m(x)dx.
\end{array}
\right.
\end{equation}

By setting
\begin{equation*}
U(x,m,t)=\frac{\partial V(m,t)}{\partial m}(x),
\end{equation*}
we can rewrite (\ref{eq:2.6}) as 
\begin{equation}
\frac{\partial V}{\partial t}-\int_{\mathbb{R}^{n}}\mathcal{A}U(\xi,m,t)m(\xi)d\xi+\int_{\mathbb{R}^{n}}H(\xi,m,DU)m(\xi)d\xi=0\label{eq:2.9}
\end{equation}
since the optimization in $v$ can be done inside the integral. We
next differentiate (\ref{eq:2.9}) with respect to $m$ which gives
\begin{equation}
\begin{array}{l}
\dfrac{\partial}{\partial m}\bigg[\displaystyle\int_{\mathbb{R}^{n}}H(\xi,m,DU(\xi,m,t))m(\xi)d\xi\bigg](x)=H(x,m,DU(x))\\
\qquad+\displaystyle\int_{\mathbb{R}^{n}}\frac{\partial}{\partial m}H(\xi,m,DU(\xi))(x)m(\xi)d\xi+\int_{\mathbb{R}^{n}}G(\xi,m,DU(\xi))m(\xi)D_{\xi}\frac{\partial}{\partial m}U(\xi,m,t)(x)d\xi.
\end{array}
\end{equation}

Hence under sufficient continuous differentiability, using 
\begin{equation}
\label{add_3}
\frac{\partial}{\partial m}U(\xi,m,t)(x)=\frac{\partial}{\partial m}U(x,m,t)(\xi)=\frac{\partial^{2}V(m,t)}{\partial m^{2}}(x,\xi),
\end{equation}
we obtain the master equation
\begin{equation}
\label{eq:2.10}
\left\{
\begin{array}{l}
	-\dfrac{\partial U}{\partial t}+\mathcal{A}U+\displaystyle\int_{\mathbb{R}^{n}}\frac{\partial}{\partial m}U(x,m,t)(\xi)(\mathcal{A}^{*}m(\xi)+\text{div (}G(\xi,m,DU(\xi))m(\xi))d\xi\\
	\qquad=H(x,m,DU(x))+\displaystyle\int_{\mathbb{R}^{n}}\frac{\partial}{\partial m}H(\xi,m,DU(\xi))(x)m(\xi)d\xi,\\
	U(x,m,T)=h(x,m)+\displaystyle\int_{\mathbb{R}^{n}}\frac{\partial}{\partial m}h(\xi,m)(x)m(\xi)d\xi.
\end{array}
\right.
\end{equation}

The probability density, corresponding to the optimal feedback control, is given by
\begin{equation*}
\left\{
\begin{array}{l}
	\dfrac{\partial m}{\partial t}+\mathcal{A}^{*}m+\text{div }(G(x,m,DU)m(x))=0,\\
	m(x,0)=m_{0}(x).
\end{array}
\right.
\end{equation*}

Define $u(x,t)=U(x,m(t),t),$ then clearly, from (\ref{eq:2.10})
we obtain 
\begin{equation}
\label{eq:2.12}
\left\{
\begin{split}
	-\dfrac{\partial u}{\partial t}+\mathcal{A}u  &=H(x,m,Du(x))+\displaystyle\int_{\mathbb{R}^{n}}\frac{\partial}{\partial m}H(\xi,m,Du(\xi))(x)m(\xi)d\xi,\\
	u(x,T)&=h(x,m)+\displaystyle\int_{\mathbb{R}^{n}}\frac{\partial}{\partial m}h(\xi,m)(x)m(\xi)d\xi;	
\end{split}
\right.
\end{equation}
which together with the FP equation

\begin{equation}
\label{eq:2.14}
\left\{
\begin{array}{l}
\dfrac{\partial m}{\partial t}+\mathcal{A}^{*}m+\text{div }(G(x,m,Du)m(x)) =0,\\
m(x,0)=m_{0}(x);
\end{array}
\right.
\end{equation}
form the system of coupled HJB- FP equations of the classical mean
field type control problem, see \cite{BFY}. 

\subsection{\label{sub:MEAN-FIELD-GAMES}MEAN FIELD GAMES }

In Mean Field Games, we cannot have a Bellman equation, similar to
(\ref{eq:2.6}), (\ref{eq:2.7-1}), since the problem is not simply a control problem. However, if we first fixed parameter $m(\cdot)$ in (\ref{eq:1.3}), (\ref{eq:1.4}) we have a standard control problem. We introduce the state dynamics and the cost functional accordingly
\begin{equation}
\left\{
\begin{split}
dx(s)&=g(x(s),m,v(x(s)))ds+\sigma(x(s))dw,\label{eq:1.3-1}\\
x(t) &=x;
\end{split}
\right.
\end{equation}
\begin{equation}
J_{x,t}(v(\cdot),m(\cdot))= \mathbb{E} \bigg[\int_{t}^{T}f(x(s),m(s),v(x(s)))ds+h(x(T),m(T))\bigg].\label{eq:1.4-1}
\end{equation}

If we set
\[
u(x,t)=\inf_{v(\cdot)}J_{x,t}(v(\cdot),m(\cdot))
\]
in which we omit to write explicitly the dependence of $u$ in $m.$ Then
$u(x,t)$ satisfies Bellman equation 
\begin{equation}
\left\{
\begin{split}
-\dfrac{\partial u}{\partial t}+\mathcal{A}u&=H(x,m,Du(x)),\label{eq:2.12-1}\\
u(x,T)&=h(x,m).
\end{split}
\right.
\end{equation}

For mean field games, we next require that $m$ must be the probability density of the optimal state, hence 
\begin{equation}
\left\{
\begin{array}{l}
\dfrac{\partial m}{\partial t}+\mathcal{A}^{*}m+\text{div }(G(x,m,Du)m(x)) =0,\label{eq:2.14-1}\\
m(x,0)=m_{0}(x);
\end{array}
\right.
\end{equation}
 and this is the system of HJB-FP equations, corresponding to the
classical Mean Field Games problem. We can check that, if one considers
the Master equation
\begin{equation}
\label{eq:2.10-1}
\left\{
\begin{array}{l}
-\dfrac{\partial U}{\partial t}+\mathcal{A}U+\displaystyle\int_{\mathbb{R}^{n}}\frac{\partial}{\partial m}U(x,m,t)(\xi)(\mathcal{A}^{*}m(\xi)+\text{div (}G(\xi,m,DU(\xi))m(\xi))d\xi\\
\qquad=H(x,m,DU(x)),\\
U(x,m,T)=h(x,m);
\end{array}
\right.
\end{equation}
then 
\begin{equation}
u(x,t)=U(x,m(t),t)\label{eq:2.12-2}.
\end{equation}
Combining (\ref{eq:2.10-1}) and (\ref{eq:2.14-1}), we obtain easily (\ref{eq:2.12-1}), and hence (\ref{eq:2.12-2}) if the problem is well-posed.

\section{STOCHASTIC MEAN FIELD TYPE CONTROL}

\subsection{\label{sub:PRELIMINARIES}PRELIMINARIES}

If we look at the formulation (\ref{eq:2.1}), (\ref{eq:2.2}) of
the mean field type control problem, it is a deterministic problem,
although at the origin it was a stochastic one, see (\ref{eq:1.1}),
(\ref{eq:1.2}). We now consider a stochastic version of (\ref{eq:2.1}),
(\ref{eq:2.2}) or a doubly stochastic version of (\ref{eq:1.1}),
(\ref{eq:1.2}). Let us begin with this one. Assume that there is a second
standard Wiener process $b(t)$ with values in $\mathbb{R}^{n}$; $b(t)$ and
$w(t)$ are independent which are also independent of $x_{0}$. We set $\mathcal{B}^{t}$=$\sigma(b(s):s\leq t)$
and $\mathcal{F}$$^{t}$=$\sigma(x_{0},b(s),w(s):\: s\leq t)$ .
The control $v(x,t)$ at time $t$ is a feedback, but not deterministic, i.e. the functional form of $v$ is $\mathcal{B}^{t}$ adapted. We consider
the stochastic McKean-Vlasov equation 
\begin{equation}
\label{eq:3.1}
\left\{
\begin{split}
dx&=g(x,m_{v(\cdot)}(t),v(x,t))dt+\sigma(x)dw+\beta db(t),\\
x(0)&=x_{0},\\
\end{split}
\right. 
\end{equation}
in which $m_{v(\cdot)}(t)$ represents the conditional probability density of $x(t)$, given the $\sigma$-algebra $\mathcal{B}^{t}$ . The stochastic mean field type control problem aims at minimizing the objective functional
\begin{equation}
J(v(\cdot))= \mathbb{E} \bigg[\int_{0}^{T}f(x(t),m_{v(\cdot)}(t),v(x(t),t))dt+h(x(T),m_{v(\cdot)}(T))\bigg].\label{eq:3.2}
\end{equation}
\subsection{CONDITIONAL PROBABILITY}

Let $y(t)=x(t)- \beta b(t)$, then the process $y(t)$ satisfies the equation 
\begin{equation}
\label{eq:3.1-1}
\left\{
\begin{split}
&dy=g(y(t)+\beta b(t),m_{v(\cdot)}(t),v(y(t)+\beta b(t),t))dt+\sigma(y(t)+\beta b(t))dw,\\
&y(0)=x_{0}.\\
\end{split}
\right.
\end{equation}
If we fix $b(s),s\leq t$, then the conditional probability of $y(t)$
is simply the probability density arising from the Wiener process
$w(t)$, in view of the independence of $w(t)$ and $b(t).$ It is
the function $p(y,t)$ solution of 

\begin{equation}
\left\{
\begin{array}{l}
	\dfrac{\partial p}{\partial t}-\dfrac{1}{2}\displaystyle\sum_{ij}(a_{i,j}(y+\beta b(t))\frac{\partial^{2}p}{\partial y_{i}\partial y_{j}})+\text{div }\left(g(y+\beta b(t),m_{v(\cdot)}(t),v(y+\beta b(t),t))p\right)=0\\
	p(y,0)=m_{0}(y)\\
\end{array}
\right.
\end{equation}

The conditional probability density of $x(t)$ given $\mathcal{B}^{t}$
is $m_{v(\cdot)}(x,t)=p(x-\beta b(t),t)$, and hence 
\[
\partial_{t}m_{v(\cdot)}=(\frac{\partial p}{\partial t}+\frac{1}{2}\beta^{2}\Delta p)(x-\beta b(t),t)dt-\beta Dp(x-\beta b(t),t)db(t).
\]

We thus have
\begin{equation}
\label{eq:3.2-1}
\left\{
\begin{array}{l}
\partial_{t}m_{v(\cdot)}+(\mathcal{A}^{*}m_{v(\cdot)}-\dfrac{1}{2}\beta^{2}\Delta m_{v(\cdot)}+\text{div}(g(x,m_{v(\cdot)}(t),v(x,t))m_{v(\cdot)}))dt+\beta Dm_{v(\cdot)}db(t)=0,\\
m_{v(\cdot)}(x,0)=m_{0}(x);
\end{array}
\right.
\end{equation}
and the objective functional (\ref{eq:3.2}) can be written as 
\begin{equation}
J(v(\cdot))= \mathbb{E} \bigg[\int_{0}^{T}\int_{\mathbb{R}^{n}}f(x,m_{v(\cdot)}(t),v(x,t))m_{v(\cdot)}(x,t)dxdt+\int_{\mathbb{R}^{n}}h(x,m_{v(\cdot)}(T))m_{v(\cdot)}(x,T)dx\bigg].\label{eq:3.3}
\end{equation}
The problem becomes a stochastic control problem for a distribution-valued parameter system. Using the invariant embedding again, we consider the family of problems indexed by $m,t$
\begin{equation}
\label{eq:3.4}
\left\{
\begin{array}{l}
\partial_{s}m_{v(\cdot)}+(\mathcal{A}^{*}m_{v(\cdot)}-\dfrac{1}{2}\beta^{2}\Delta m_{v(\cdot)}+\text{div}(g(x,m_{v(\cdot)}(s),v(x,s))m_{v(\cdot)}))ds+\beta Dm_{v(\cdot)}db(s)=0,\\
m_{v(\cdot)}(x,t)=m(x),\\
\end{array}
\right.
\end{equation}
and 
\begin{equation}
J_{m,t}(v(\cdot))= \mathbb{E} \bigg[\int_{t}^{T}\int_{\mathbb{R}^{n}}f(x,m_{v(\cdot)}(s),v(x,s))m_{v(\cdot)}(x,s)dxdt+\int_{\mathbb{R}^{n}}h(x,m_{v(\cdot)}(T))m_{v(\cdot)}(x,T)dx\bigg].\label{eq:3.5}
\end{equation}

Set 
\[
V(m,t)=\inf_{v(\cdot)}J_{m,t}(v(\cdot)),
\]
then $V(m,t)$ satisfies the Dynamic Programming equation 
\begin{equation}
\label{eq:2.6-1}
\left\{
\begin{array}{l}
	\dfrac{\partial V}{\partial t}-\displaystyle\int_{\mathbb{R}^{n}}\dfrac{\partial V(m,t)}{\partial m}(\xi)(\mathcal{A}^{*}m(\xi)-\frac{1}{2}\beta^{2}\Delta m(\xi))d\xi  \\
	\qquad+\dfrac{1}{2}\beta^{2}\displaystyle\int_{\mathbb{R}^{n}}\displaystyle\int_{\mathbb{R}^{n}}\dfrac{\partial^{2}V(m,t)}{\partial m^{2}}(\xi,\eta)Dm(\xi)Dm(\eta)d\xi d\eta\\
	\qquad+\displaystyle\inf_{v}\bigg(\displaystyle\int_{\mathbb{R}^{n}}f(\xi,m,v(\xi))m(\xi)d\xi-\displaystyle\int_{\mathbb{R}^{n}}\dfrac{\partial V(m,t)}{\partial m}(\xi)\text{div }(g(\xi,m,v(\xi))m(\xi))d\xi\bigg)=0,\\
	V(m,T)=\displaystyle\int_{\mathbb{R}^{n}}h(x,m)m(x)dx.
\end{array}
\right.
\end{equation}
\subsection{MASTER EQUATION }

To obtain the master equation, we define again $U(x,m,t)=\frac{\partial V(m,t)}{\partial m}(x)$ and we have the terminal condition
\begin{equation}
U(x,m,T)=h(x,m)+\int_{\mathbb{R}^{n}}\frac{\partial h(\xi,m)}{\partial m}(x)m(\xi)d\xi,\label{eq:2.6-2}
\end{equation}
and from (\ref{eq:2.6-1}) we obtain
\begin{equation}
\label{eq:2.7-1}
\begin{array}{l}
	\dfrac{\partial V}{\partial t}-\displaystyle\int_{\mathbb{R}^{n}}(\mathcal{A}U-\dfrac{1}{2}\beta^{2}\Delta U)(\xi,m,t)m(\xi)d\xi\\
	\qquad+\dfrac{1}{2}\beta^{2}\displaystyle\int_{\mathbb{R}^{n}}\displaystyle\int_{\mathbb{R}^{n}}\dfrac{\partial U(\xi,m,t)}{\partial m}(\eta)Dm(\xi)Dm(\eta)d\xi d\eta+\displaystyle\int_{\mathbb{R}^{n}}H(\xi,m,DU)m(\xi)d\xi=0.\\
\end{array}
\end{equation}

We then differentiate (\ref{eq:2.7-1}) with respect to $m$ to obtain the master equation. We note that
\begin{equation*}
\begin{array}{rcl}
&&\dfrac{\partial}{\partial m}\left(\displaystyle\int_{\mathbb{R}^{n}}\displaystyle\int_{\mathbb{R}^{n}}\dfrac{\partial U(\xi,m,t)}{\partial m}(\eta)Dm(\xi)Dm(\eta)d\xi d\eta\right)(x)\\
&=&\displaystyle\int_{\mathbb{R}^{n}}\displaystyle\int_{\mathbb{R}^{n}}\dfrac{\partial^{2}U(x,m,t)}{\partial m^{2}}(\xi,\eta)Dm(\xi)Dm(\eta)d\xi d\eta-2\text{div}\left(\displaystyle\int_{\mathbb{R}^{n}}\dfrac{\partial U(x,m,t)}{\partial m}(\eta)Dm(\eta)d\eta\right),
\end{array}
\end{equation*}
and 
\begin{equation*}
\begin{array}{rcl}
&&\dfrac{\partial}{\partial m}\left(\displaystyle\int_{\mathbb{R}^{n}}H(\xi,m,DU)m(\xi)dx\right)(x)\\
&=&H(x,m,DU(x))+\displaystyle\int_{\mathbb{R}^{n}}\dfrac{\partial}{\partial m}H(\xi,m,DU(\xi))(x)m(\xi)d\xi\\
&&\qquad +\displaystyle\int_{\mathbb{R}^{n}}\dfrac{\partial}{\partial m}(DU(\xi,m,t))(x)G(\xi,m,DU(\xi))m(\xi)d\xi.
\end{array}
\end{equation*}

Next, using (\ref{add_3}) in terms of those in the present setting, we have 
\[
\int_{\mathbb{R}^{n}}\frac{\partial}{\partial m}(DU(\xi,m,t))(x)G(\xi,m,DU(\xi))m(\xi)d\xi=\int_{\mathbb{R}^{n}}D_{\xi}(\frac{\partial}{\partial m}U(\xi,m,t)(x))G(\xi,m,DU(\xi))m(\xi)d\xi
\]
\[
=-\int_{\mathbb{R}^{n}}\frac{\partial}{\partial m}U(\xi,m,t)(x)\:\text{div}(G(\xi,m,DU(\xi))m(\xi))d\xi=-\int_{\mathbb{R}^{n}}\frac{\partial}{\partial m}U(x,m,t)(\xi)\:\text{div}(G(\xi,m,DU(\xi))m(\xi))d\xi.
\]

Collecting results, we obtain the Master equation 
\begin{equation}
\label{eq:2.8-1}
\left\{
\begin{array}{l}
	-\dfrac{\partial U}{\partial t}+\mathcal{A}U-\dfrac{1}{2}\beta^{2}\Delta U\\
	\qquad+\displaystyle\int_{\mathbb{R}^{n}}\dfrac{\partial}{\partial m}U(x,m,t)(\xi)\left(\mathcal{A}^{*}m(\xi)-\dfrac{1}{2}\beta^{2}\Delta m(\xi)+\text{div (}G(\xi,m,DU(\xi))m(\xi))\right)d\xi\\
	\qquad-\dfrac{1}{2}\beta^{2}\displaystyle\int_{\mathbb{R}^{n}}\displaystyle\int_{\mathbb{R}^{n}}\dfrac{\partial^{2}U(x,m,t)}{\partial m^{2}}(\xi,\eta)Dm(\xi)Dm(\eta)d\xi d\eta+\beta^{2}\text{div}\left(\displaystyle\int_{\mathbb{R}^{n}}\dfrac{\partial U(x,m,t)}{\partial m}(\xi)Dm(\xi)d\xi\right)\\
	=H(x,m,DU(x))+\displaystyle\int_{\mathbb{R}^{n}}\dfrac{\partial}{\partial m}H(\xi,m,DU(\xi))(x)m(\xi)d\xi;\\
	U(x,m,T)=h(x,m)+\displaystyle\int_{\mathbb{R}^{n}}\dfrac{\partial h(\xi,m)}{\partial m}(x)m(\xi)d\xi.
\end{array}
\right.
\end{equation}

We note that this equation reduces to (\ref{eq:2.10}) when $\beta=0$.
\subsection{SYSTEM OF HJB-FP EQUATIONS }

We first check that we can derive from the Master equation a system
of coupled stochastic HJB-FP equations. Consider the conditional probability density process corresponding to the optimal feedback $\hat{v}(x,m,DU(x,m,t))$, which is the solution of 
\begin{equation}
\left\{
\begin{array}{l}
\partial_{t}m+(\mathcal{A}^{*}m-\dfrac{1}{2}\beta^{2}\Delta m+\text{div}(G(x,m,DU)m))dt+\beta Dm.db(t)=0\label{eq:3.6}\\
m(x,0)=m_{0}(x).\\
\end{array}
\right.
\end{equation}

Set $u(x,t)=U(x,m(t),t)$, we obtain 
\begin{equation}
\label{eq:3.7}
\left\{
\begin{array}{l}
	-\partial_{t}u+(\mathcal{A}u-\dfrac{1}{2}\beta^{2}\Delta u)dt +\beta^{2}\text{div}\left(\displaystyle\int_{\mathbb{R}^{n}}\dfrac{\partial U(x,m,t)}{\partial m}(\xi)Dm(\xi)d\xi\right)dt\\
	\qquad=\bigg(H(x,m,Du(x))+\displaystyle\int_{\mathbb{R}^{n}}\dfrac{\partial}{\partial m}H(\xi,m,Du(\xi))(x)m(\xi)d\xi\bigg)dt+\beta\displaystyle\int_{\mathbb{R}^{n}}\dfrac{\partial U(x,m,t)}{\partial m}(\xi)Dm(\xi)d\xi db(t)\\
	u(x,T)=h(x,m)+\displaystyle\int_{\mathbb{R}^{n}}\dfrac{\partial h(\xi,m)}{\partial m}(x)m(\xi)d\xi.
\end{array}
\right.
\end{equation}

Let
\begin{equation}
	B(x,t)=\int_{\mathbb{R}^{n}}\frac{\partial U(x,m,t)}{\partial m}(\xi)Dm(\xi)d\xi,
\end{equation}
we can rewrite (\ref{eq:3.6}), (\ref{eq:3.7}) as follows, by noting the It\^o's correction term of $u$ that involves the second derivative of $U$ with respect to $m$,
\begin{equation}
\label{eq:3.7-1}
\begin{array}{l}
\left\{
\begin{array}{l}
-\partial_{t}u+(\mathcal{A}u-\dfrac{1}{2}\beta^{2}\Delta u)dt +\beta^{2}\text{div}B(x,t)dt\\
\qquad=\bigg(H(x,m,Du(x))+\displaystyle\int_{\mathbb{R}^{n}}\dfrac{\partial}{\partial m}H(\xi,m,Du(\xi))(x)m(\xi)d\xi\bigg)dt+\beta B(x,t) db(t)\\
u(x,T)=h(x,m)+\displaystyle\int_{\mathbb{R}^{n}}\dfrac{\partial h(\xi,m)}{\partial m}(x)m(\xi)d\xi.
\end{array}
\right.\\
\\
\left\{
\begin{array}{l}
\partial_{t}m+(\mathcal{A}^{*}m-\dfrac{1}{2}\beta^{2}\Delta m+\text{div}(G(x,m,Du)m))dt+\beta Dmdb(t)=0\\
m(x,0)=m_{0}(x).\\
\end{array}
\right.
\end{array}
\end{equation}

Since the equation for $u$ is a backward stochastic partial differential equation (BSPDE), the solution is expressed by the pair $(u(x,t),B(x,t))$ which is adapted to the filtration
$\mathcal{B}^{t}$.

\subsection{OBTAINING THE SYSTEM OF STOCHASTIC HJB-FP EQUATIONS BY CALCULUS OF
VARIATIONS}

In this section, we are going to check that the system (\ref{eq:3.7-1}) can be also obtained by calculus
of variations techniques, without referring to the Master equation.
This is similar to approach in the deterministic case, see \cite{BFY}.
We go back to the formulation (\ref{eq:3.2-1}), (\ref{eq:3.3}) .
Let $\hat{v}(x,t)$ be an optimal feedback (it is a random field
adapted to $\mathcal{B}^{t}$). We denote $m(x,t)=m_{\hat{v}(\cdot)}(x,t)$,
which is therefore the solution of 
\begin{equation}
\left\{
\begin{array}{l}
\partial_{t}m+(\mathcal{A}^{*}m-\dfrac{1}{2}\beta^{2}\Delta m+\text{div}(g(x,m(t),\hat{v}(x,t))m))dt+\beta Dmdb(t)=0,\label{eq:3.4-1}\\
m(x,0)=m_{0}(x).
\end{array}
\right.
\end{equation}

We can compute its G\^ateaux differential 
\[
\tilde{m}(x,t)=\dfrac{d}{d\theta}m_{\hat{v}(\cdot)+\theta v(\cdot)}(x,t)|_{\theta=0},
\]
which satisfies
\begin{equation}
\label{eq:3.4-2}
\left\{
\begin{array}{l}
	\partial_{t}\tilde{m}+(\mathcal{A}^{*}\tilde{m}-\dfrac{1}{2}\beta^{2}\Delta\tilde{m}+\text{div}(g(x,m(t),\hat{v}(x,t))\tilde{m}))dt+\beta D\tilde{m}db(t)\\
	\qquad+\text{div}\:\left(\Big[\displaystyle\int_{\mathbb{R}^{n}}\dfrac{\partial g(x,m,\hat{v}(x))}{\partial m}(\xi)\tilde{m}(\xi)d\xi+\dfrac{\partial g}{\partial v}(x,m,\hat{v}(x))v(x)\Big]m(x)\right)dt=0\\
	\tilde{m}(x,0)=0\\
\end{array}
\right.
\end{equation}

We next compute the G\^ateaux differential of the cost functional 
\begin{equation}
\begin{array}{rl}
	&\dfrac{d}{d\theta}J(\hat{v}(\cdot)+\theta v(\cdot))|_{\theta=0}\\
	=& \mathbb{E} \bigg[\displaystyle\int_{0}^{T}\int_{\mathbb{R}^{n}}f(x,m(t),\hat{v}(x,t))\tilde{m}(x,t)dxdt+\int_{0}^{T}\int_{\mathbb{R}^{n}}\frac{\partial f}{\partial v}(x,m(t),\hat{v}(x,t))v(x)m(x)dxdt\\	
	&\qquad+\displaystyle\int_{0}^{T}\int_{\mathbb{R}^{n}}\int_{\mathbb{R}^{n}}\frac{\partial f}{\partial m}(x,m(t),\hat{v}(x,t))(\xi)\tilde{m}(\xi,t)m(x,t)dxd\xi dt\\
	&\qquad+\displaystyle\int_{\mathbb{R}^{n}}h(x,m(T))\tilde{m}(x,T)dx+\int_{\mathbb{R}^{n}}\int_{\mathbb{R}^{n}}\frac{\partial}{\partial m}h(x,m(T))(\xi)\tilde{m}(\xi,T)m(x,T)dx\bigg].
\end{array}
\end{equation}

As an adjoint equation, consider the BSPDE:
\begin{equation}
\label{eq:3.9}
\left\{
\begin{array}{rcl}
	-\partial_{t}u&=&\bigg(f(x,m,\hat{v}(x))+Dug(x,m,\hat{v}(x))-\mathcal{A}u+\frac{1}{2}\beta^{2}\Delta u-\beta^{2}\text{div} B(x,t)\\
	&&\qquad+\displaystyle\int_{\mathbb{R}^{n}}\frac{\partial}{\partial m}f(\xi,m,\hat{v}(\xi))(x)m(\xi)d\xi+\int_{\mathbb{R}^{n}}Du(\xi)\frac{\partial}{\partial m}g(\xi,m,\hat{v}(\xi))(x)m(\xi)d\xi\bigg)dt\\
	&&+\beta B(x,t)db(t),\\
	u(x,T) & =&h(x,m)+\displaystyle\int_{\mathbb{R}^{n}}\frac{\partial}{\partial m}h(\xi,m)(x)m(\xi)d\xi.
\end{array}
\right.
\end{equation}

We can then write 
\begin{equation*}
\begin{array}{rcl}
	&&\displaystyle\frac{d}{d\theta}J(\hat{v}(\cdot)+\theta v(\cdot))|_{\theta=0}\\
	&=&\mathbb{E} \displaystyle\int_{0}^{T}\bigg[\int_{\mathbb{R}^{n}}\frac{\partial f}{\partial v}(x,m(t),\hat{v}(x,t))v(x)m(x)dx\\
	&&\qquad\qquad+\displaystyle\int_{\mathbb{R}^{n}}\tilde{m}(x,t)\Big[-\partial_{t}u+(\mathcal{A}u-\frac{1}{2}\beta^{2}\Delta u+\beta^{2}\text{div} B)-Du . g(x,m,\hat{v}(x))\\
	&&\qquad\qquad\qquad\qquad\qquad\qquad\qquad\qquad-\displaystyle\int_{\mathbb{R}^{n}}Du(\xi)\frac{\partial}{\partial m}g(\xi,m,\hat{v}(\xi))(x)m(\xi)d\xi\Big]dx\bigg]dt\\
	&&\qquad+\mathbb{E}\displaystyle\int_{\mathbb{R}^{n}}u(x,T)\tilde{m}(x,T) dx.
\end{array}
\end{equation*}

But 
\begin{equation*}
	\partial_{t}\int_{\mathbb{R}^{n}}u(x,t)\tilde{m}(x,t)dx=\int_{\mathbb{R}^{n}}\partial_{t}u(x,t)\tilde{m}(x,t)dx+\int_{\mathbb{R}^{n}}\partial_{t}\tilde{m}(x,t)u(x,t)dx+\beta^{2}\int_{\mathbb{R}^{n}}D\tilde{m}(x,t)B(x,t)dx,
\end{equation*}
therefore 
\[
\partial_{t}\int_{\mathbb{R}^{n}}u(x,t)\tilde{m}(x,t)dx-\int_{\mathbb{R}^{n}}\partial_{t}u(x,t)\tilde{m}(x,t)dx+\beta^{2}\int_{\mathbb{R}^{n}}\text{div}\: B(x,t)\,\tilde{m}(x,t)dx=\int_{\mathbb{R}^{n}}\partial_{t}\tilde{m}(x,t)u(x,t)dx,
\]
which implies
\begin{equation*}
\begin{array}{rcl}
	&&\displaystyle\frac{d}{d\theta}J(\hat{v}(\cdot)+\theta v(\cdot))|_{\theta=0}\\
	&=& \mathbb{E} \displaystyle\int_{0}^{T}\int_{\mathbb{R}^{n}}\frac{\partial f}{\partial v}(x,m(t),\hat{v}(x,t))v(x)m(x)dxdt\\
	&&\qquad+\mathbb{E} \displaystyle\int_{0}^{T}\int_{\mathbb{R}^{n}}u(x,t)\Big[\partial_{t}\tilde{m}(x,t)+(\mathcal{A}^{*}\tilde{m}-\frac{1}{2}\beta^{2}\Delta\tilde{m}+\text{div}(g(x,m(t),\hat{v}(x,t))\tilde{m}))dt\\
	&&\qquad+\text{div}\,(m(x)\displaystyle\int_{\mathbb{R}^{n}}\frac{\partial g(x,m,\hat{v}(x))}{\partial m}(\xi)\tilde{m}(\xi)d\xi)\: dt\Big]dx.
\end{array}
\end{equation*}

From (\ref{eq:3.4-2}) we obtain 
\begin{equation*}
\begin{array}{rcl}
	&&\displaystyle\frac{d}{d\theta}J(\hat{v}(\cdot)+\theta v(\cdot))|_{\theta=0}\\
	&=&\mathbb{E} \displaystyle\int_{0}^{T}\int_{\mathbb{R}^{n}}\frac{\partial f}{\partial v}(x,m(t),\hat{v}(x,t))v(x)m(x)dxdt\\
	&&\qquad- \mathbb{E} \displaystyle\int_{0}^{T}\int_{\mathbb{R}^{n}}  u(x,t)\text{div (}\frac{\partial g}{\partial v}(x,m,\hat{v}(x))v(x)m(x))dxdt\\
	&=&\mathbb{E} \displaystyle\int_{0}^{T}\int_{\mathbb{R}^{n}}(\frac{\partial f}{\partial v}(x,m(t),\hat{v}(x,t))v(x)+Du\frac{\partial g}{\partial v}(x,m,\hat{v}(x,t))v(x))m(x)dxdt,
\end{array}
\end{equation*}
and since $v$ is arbitrary, for if the support of $m$ is $\mathbb{R}^n$, $\hat{v}(x,t)$ satisfies (because of optimality) 
\[
\frac{\partial f}{\partial v}(x,m(t),\hat{v}(x,t))+Du\frac{\partial g}{\partial v}(x,m,\hat{v}(x,t))=0,
\]
and therefore $\hat{v}(x,t)=\hat{v}(x,m(t),Du(x,t))$. 

One obtains immediately that the pair $(u(x,t),m(x,t))$ is a solution
of the system of stochastic HJB-FP equations (\ref{eq:3.7-1}).

\section{STOCHASTIC MEAN FIELD GAMES}

\subsection{GENERAL COMMENTS}

How to obtain a system of coupled HJB-FP equations in the case of
stochastic mean field games? In the deterministic case, see Section
\ref{sub:MEAN-FIELD-GAMES}, the idea was to consider $m(t)$ as a
parameter, and to solve a standard stochastic control problem. For
this problem, one can use standard Dynamic Programming to obtain
the HJB equation, depending on the parameter $m(t).$ One expresses
next the fixed point property, namely that $m(t)$ is the probability
density of the optimal state. This leads to the FP equation. 

By analogy with what was done in the framework of mean field type
control, the HJB equation becomes stochastic, and $m(t)$ becomes
the conditional probability density. This motivates the model we develop
in this section

\subsection{THE MODEL }

Adopt the notation of Section \ref{sub:PRELIMINARIES}. We recall
that the state equation is the solution of 
\begin{equation}
\left\{
\begin{split}
	dx & =g(x,m(t),v(x,t))dt+\sigma(x)dw+\beta db(t)\label{eq:4.1},\\
	x(0) & =x_{0};
\end{split}
\right.
\end{equation}
and $\mathcal{B}^{t}$=$\sigma(b(s),s\leq t)$ , $\mathcal{F}$$^{t}$=$\sigma(x_{0},b(s),w(s),\: s\leq t)$.
This time $m(t)$ is a given process adapted to $\mathcal{B}^{t}$
with values in $L^{2}(\mathbb{R}^{n}).$ We again consider feedback controls
which are field processes adapted to the filtration $\mathcal{B}^{t}$. 

We follow the theory developed by Shi-ge Peng \cite{SGP}. We define
the function 
\begin{equation}
u(x,t)=\inf_{v(\cdot)}\:  \mathbb{E} ^{\mathcal{B}^{t}}[\int_{t}^{T}f(x(s),m(s),v(x(s),s))ds+h(x(T),m(T))]\label{eq:4.2}
\end{equation}
and show that it satisfies a stochastic Hamilton-Jacobi-Bellman equation.
Although, it is possible to proceed directly and formally from the
definition of $u(x,t),$ to find out the stochastic HJB equation,
the best is to postulate the equation and to use a verification argument.
The equation is 
\begin{equation}
\left\{
\begin{array}{l}
	-\partial_{t}u+(\mathcal{A}u-\frac{1}{2}\beta^{2}\Delta u)dt  +\beta^{2}\text{div} B(x,t)dt=H(x,m(t),Du(x))dt+\beta B(x,t)db(t),\label{eq:4.3}\\
	u(x,T) =h(x,m(T)).
\end{array}
\right.
\end{equation}

If we can solve (\ref{eq:4.3}) for the pair $(u(x,t),\: B(x,t))$, $\mathcal{B}^{t}$-adapted field processes, then the verification argument is as follows: for any $\mathcal{B}^{t}$-adapted field process $v(x,t)$, we write
\[
H(x,m(t),Du(x,t))\leq f(x,m(t),v(x,t))+Du(x,t)g(x,m(t),v(x,t)).
\]

 Consider the process $x(t)$, the solution of the state equation (\ref{eq:4.1}), we compute the Ito differential $d\, u(x(t),t)$ by using a generalized It\^o's formula due to Kunita, which gives
\begin{equation*}
\begin{array}{l}
	du(x(t),t)=(Du g(x(t),m(t),v(x(t),t))+\displaystyle\frac{1}{2}\text{tr}(a(x(t))D^{2}u)+\frac{\beta^{2}}{2}\Delta u)dt\\
	\qquad\qquad\qquad\qquad+Du(\sigma(x(t))dw+\beta db(t))+\partial_{t}u(x(t),t)-\beta^{2}\text{div}(B(x(t),t))dt.
\end{array}
\end{equation*}
Hence we have the inequality, by using (\ref{eq:4.3}),
\begin{equation*}
\begin{array}{l}
	h(x(T),m(T))+\displaystyle\int_{t}^{T}f(x(s),m(s),v(x(s),s))ds\\
	\qquad\qquad\geq u(x,t)+\displaystyle\int_{t}^{T}Du(x(s))(\sigma(x(s))dw(s)+\beta db(s))-\beta\int_{t}^{T}B(x(s))db(s),
\end{array}
\end{equation*}
 from which we get 
\[
u(x,t)\leq  \mathbb{E} ^{\mathcal{B}^{t}}[\int_{t}^{T}f(x(s),m(s),v(x(s),s))ds+h(x(T),m(T))],
\]
 and a similar development used for the optimal feedback yields the
equality, hence the property (\ref{eq:4.2}) follows. 

Next, consider the optimal feedback $\hat{v}(x,m(t),Du(x,t))$ and
impose the fixed point property that $m(t)$ is conditional probability
density of the optimal state, we get the stochastic FP equation 
\begin{equation}
\left\{
\begin{array}{l}
\partial_{t}m+(\mathcal{A}^{*}m-\frac{1}{2}\beta^{2}\Delta m+\text{div}(G(x,m,Du)m))dt+\beta Dmdb(t)=0,\label{eq:4.5}\\
m(x,0)=m_{0}(x).
\end{array}
\right.
\end{equation}

We thus have obtained the pair of HJB-FP equations for the stochastic mean field game problem, (\ref{eq:4.3}) and (\ref{eq:4.5}).

\subsection{THE MASTER EQUATION}

We shall derive the Master equation by writing $u(x,t)=U(x,m(t),t)$. We must have, by using (\ref{eq:4.5}), 
\begin{equation*}
\begin{array}{l}
	\partial_{t}u=\bigg[\displaystyle\frac{\partial U}{\partial t}-\int_{\mathbb{R}^{n}}\frac{\partial U(x,m(t),t)}{\partial m}(\xi)(\mathcal{A}^{*}m(\xi)-\frac{1}{2}\beta^{2}\Delta m(\xi)+\text{div }(G(\xi,m(t),DU(\xi))m(\xi)))d\xi\\
	\qquad\qquad\qquad\qquad+\displaystyle\frac{1}{2}\beta^{2}\int_{\mathbb{R}^{n}}\int_{\mathbb{R}^{n}}\frac{\partial^{2}U(x,m(t),t)}{\partial m^{2}}(\xi,\eta)Dm(\xi)Dm(\eta)d\xi d\eta\bigg]dt\\
	\qquad\qquad-\beta\displaystyle\int_{\mathbb{R}^{n}}\frac{\partial U(x,m(t),t)}{\partial m}(\xi)Dm(\xi)d\xi db(t).
\end{array}	
\end{equation*}

Comparing terms with (\ref{eq:4.3}), and letting
\begin{equation}
B(x,t)=\int_{\mathbb{R}^{n}}\frac{\partial U(x,m(t),t)}{\partial m}(\xi)Dm(\xi)d\xi,
\end{equation}
we have the master equation
\begin{equation}
\label{eq:4.7}
\left\{
\begin{array}{l}
	-\displaystyle\frac{\partial U}{\partial t}+\mathcal{A}U-\frac{1}{2}\beta^{2}\Delta U+\int_{\mathbb{R}^{n}}\frac{\partial U(x,m,t)}{\partial m}(\xi)\Big(\mathcal{A}^{*}m(\xi)-\frac{1}{2}\beta^{2}\Delta m(\xi)+\text{div}(G(\xi,m,DU(\xi))m(\xi))\Big)d\xi\\
	\qquad\qquad+\beta^{2}\text{div}\:(\displaystyle\int_{\mathbb{R}^{n}}\frac{\partial U(x,m,t)}{\partial m}(\xi)Dm(\xi)d\xi)-\frac{1}{2}\beta^{2}\int_{\mathbb{R}^{n}}\int_{\mathbb{R}^{n}}\frac{\partial^{2}U(x,m,t)}{\partial m^{2}}(\xi,\eta)Dm(\xi)Dm(\eta)d\xi d\eta\\
	\qquad=H(x,m,DU(x)),\\
	U(x,m,T)=h(x,m).\\	
\end{array}
\right.
\end{equation}
 
\begin{rem}
\label{Rem1}It is very important to notice that the property 
\begin{equation}
\frac{\partial U(x,m,t)}{\partial m}(\xi)=\frac{\partial U(\xi,m,t)}{\partial m}(x)\label{eq:4.8}
\end{equation}
is true for the Master equation of the Mean Field type control problem (\ref{eq:2.8-1}) and not true for the Master equation
of the Mean Field games problem, (\ref{eq:4.7}). We shall notice
this discrepancy in the case of linear quadratic (LQ) problems, in which
explicit formulas can be obtained. 
\end{rem}

\subsection{REMARKS ON THE APPROACH OF CARMONA-DELARUE \cite{CAD}}

Independently of our work, Carmona and Delarue \cite{CAD} have put
recently on Arxiv, an article in which they try to interpret the Master
equation differently. To synthesize our approach, we state that there
is a Bellman equation for the Mean field type control problem, in
which the space variable is in an infinite dimensional space. The
Master equation is obtained by taking the gradient in this space variable.
This differentiation is taken in the sense of Frechet. It is also
possible to derive from the Master equation a coupled system of stochastic
HJB-FP equations, in which the solution of the HJB equation appears
not as a value function, but as an adjoint equation for an infinite
dimentional stochastic control problem. This system can be also obtained
directly, and the Master equation is then a way to decouple the coupled
equations. In the case of mean field games, there is no Bellman equation,
so we introduce the system of HJB-FP equations directly, with a fixed
point approach. The Master equation is then obtained by trying to
decouple the HJB equation from the Fokker Planck equation. Carmona
and Delarue try to derive the Master equation from a common optimality
principle of Dynamic Programming, with constraints. The differences
between the Mean field games and Mean field type control cases stem
from the way the minimization is performed. It results that for the
Mean field type control, our Master equation is different from that
of Carmona-Delarue. To some extent, our approach is more analytic
(sufficient conditions) whereas the approach of Carmona-Delarue
is more probabilistic (necessary conditions). In particular, Carmona-Delarue
rely on the lift-up approach introduced by P.L. Lions in his lectures.
This method connects functions of random variables to functions of
probability measures. Both works are largely formal, and a full comparison
is a daunting work, which is left for future work. It may be interesting to also combine these ideas.

\section{LINEAR QUADRATIC PROBLEMS}

\subsection{ASSUMPTIONS AND GENERAL COMMENTS}

For linear quadratic problems, we know that explicit formulas can
be obtained. We shall see that we can solve explicitly the Master
equation in both cases, and recover all results obtained in the LQ
case. Symmetric and nonsymmetric Riccati equations naturally come
in the present framework. It is important to keep in mind that in
the Master equation, the argument $m$ is a general element of $L^{2}(\mathbb{R}^{n})$
and not necessarily a probability. When we use the value arising from
the FP equation, we get of course a probability density, provided that
the initial condition is a probability density. To synthesize formulas,
we shall use the following notation. If $m\in L^{1}(\mathbb{R}^{n})\times L^{2}(\mathbb{R}^{n})$
then we set 

\begin{equation}
m_{1}=\int_{\mathbb{R}^{n}}m(\xi)d\xi,\: y=\int_{\mathbb{R}^{n}}\xi m(\xi)d\xi.\label{eq:5.1}
\end{equation}

We then take 
\begin{equation}
\label{eq:5.2}
\begin{split}
f(x,m,v)&=\frac{1}{2}[x^{*}Qx+v^{*}Rv+(x-Sy)^{*}\bar{Q}(x-Sy)];\\
g(x,m,v)&=Ax+\bar{A}y+Bv;\\
h(x,m)&=\frac{1}{2}[x^{*}Q_{T}x+(x-S_{T}y)^{*}\bar{Q}_{T}(x-S_{T}y)];\\
\sigma(x)&=\sigma;\\
\end{split}
\end{equation}
and hence $a(x)=a=\sigma\sigma^{*}$. We deduce easily 
\begin{equation}
\label{eq:5.6}
H(x,m,q) =\frac{1}{2}x^{*}(Q+\bar{Q})\, x-x^{*}\bar{Q}Sy+\frac{1}{2}y^{*}S^{*}\bar{Q}Sy- \frac{1}{2}q^{*}BR^{-1}B^{*}q+q^{*}(Ax+\bar{A}y)
\end{equation}
and
\begin{equation}
G(x,m,q)=Ax+\bar{A}y-BR^{-1}B^{*}q.\label{eq:5.7}
\end{equation}

\subsection{MEAN FIELD TYPE CONTROL MASTER EQUATION}

We begin with Bellman equation (\ref{eq:2.7-1})
\begin{equation}
\label{eq:5.8}
\left\{
\begin{array}{l}
	\dfrac{\partial V}{\partial t}-\displaystyle\int_{\mathbb{R}^{n}}(\mathcal{A}U-\dfrac{1}{2}\beta^{2}\Delta U)(\xi,m,t)m(\xi)d\xi\\
	\qquad+\dfrac{1}{2}\beta^{2}\displaystyle\int_{\mathbb{R}^{n}}\displaystyle\int_{\mathbb{R}^{n}}\dfrac{\partial U(\xi,m,t)}{\partial m}(\eta)Dm(\xi)Dm(\eta)d\xi d\eta+\displaystyle\int_{\mathbb{R}^{n}}H(\xi,m,DU)m(\xi)d\xi=0.\\
	V(m,T)=\displaystyle\int_{\mathbb{R}^{n}}h(\xi,m)m(\xi)d\xi;
\end{array}
\right.
\end{equation}
in which 
\begin{equation}
U(\xi,m,t)=\frac{\partial V(m,t)}{\partial m}(\xi).\label{eq:5.10}
\end{equation}

We can rewrite (\ref{eq:5.8}) under the present linear quadratic setting
\begin{equation}
\label{eq:5.11}
\left\{
\begin{array}{l}
	\displaystyle\frac{\partial V}{\partial t}+\int_{\mathbb{R}^{n}}(\frac{1}{2}\text{tr}\, aD_{\xi}^{2}U(\xi,m,t)+\frac{1}{2}\beta^{2}\Delta_{\xi}U(\xi,m,t))m(\xi)d\xi\\
	\qquad+\displaystyle\frac{1}{2}\beta^{2}\int_{\mathbb{R}^{n}}\int_{\mathbb{R}^{n}}\sum_{i=1}^{n}\frac{\partial}{\partial\xi_{i}}\frac{\partial}{\partial\eta_{i}}\Big(\frac{\partial U(\xi,m,t)}{\partial m}(\eta)\Big)m(\xi)m(\eta)d\xi d\eta\\
	\qquad+\displaystyle\int_{\mathbb{R}^{n}}\bigg[\frac{1}{2}\xi^{*}(Q+\bar{Q})\xi-\xi^{*}\bar{Q}Sy+\frac{1}{2}y^{*}S^{*}\bar{Q}Sy\\
	\qquad\qquad\qquad-\displaystyle\frac{1}{2}(DU(\xi,m,t))^{*}BR^{-1}B^{*}DU(\xi,m,t)+(DU(\xi,m,t))^{*}(A\xi+\bar{A}y)\bigg]m(\xi)d\xi=0\\
	V(m,T)=\displaystyle\int_{\mathbb{R}^{n}}\frac{1}{2}\xi^{*}(Q_{T}+\bar{Q}_{T})\xi m(\xi)d\xi-\frac{1}{2}y^{*}(S_{T}^{*}\bar{Q}_{T}+\bar{Q}_{T}S_{T})y+\frac{1}{2}y^{*}S_{T}^{*}\bar{Q}_{T}S_{T}y\, m_{1}.
\end{array}
\right.
\end{equation}

We look for a solution $V$ in (\ref{eq:5.11}) of the form in which $P(t)$ and $\Sigma(t,m_1)$ are symmetric matrices
\begin{equation}
V(m,t)=\frac{1}{2}\int_{\mathbb{R}^{n}}\xi^{*}P(t)\xi m(\xi)d\xi+\frac{1}{2}y^{*}\Sigma(t,m_{1})y+\lambda(t,m_{1}).\label{eq:5.13}
\end{equation}

Clearly, the terminal condition of (\ref{eq:5.11}) yields that
\begin{align}
P(T) & =Q_{T}+\bar{Q}_{T},\;\Sigma(T,m_{1})=S_{T}^{*}\bar{Q}_{T}S_{T}\, m_{1}-(S_{T}^{*}\bar{Q}_{T}+\bar{Q}_{T}S_{T}),\:\lambda(T,m_{1})=0.\label{eq:5.14}
\end{align}

Next 
\begin{equation}
U(\xi,m,t)=\frac{1}{2}\xi^{*}P(t)\xi+y^{*}\Sigma(t,m_{1})\xi+\frac{1}{2}y^{*}\frac{\partial\Sigma(t,m_{1})}{\partial m_{1}}y+\frac{\partial\lambda(t,m_{1})}{\partial m_{1}},\label{eq:5.15}
\end{equation}
and hence
\begin{equation*}
\begin{array}{c}
	D_{\xi}U(\xi,m,t)=P(t)\xi+\Sigma(t,m_{1})y,\\ D_{\xi}^{2}U(\xi,m,t)=P(t),\\
	\displaystyle\frac{\partial U(\xi,m,t)}{\partial m}(\eta)=\eta^{*}\Sigma(t,m_{1})\xi+y^{*}\frac{\partial\Sigma(t,m_{1})}{\partial m_{1}}(\xi+\eta)+\frac{1}{2}y^{*}\frac{\partial^{2}\Sigma(t,m_{1})}{\partial m_{1}^{2}}y+\frac{\partial^{2}\lambda(t,m_{1})}{\partial m_{1}^{2}}.
\end{array}
\end{equation*}

We can see that the property (\ref{eq:4.8}) is satisfied, thanks
to the symmetry of the matrix $\Sigma(t,m_{1}).$ We need 
\[
\sum_{i=1}^{n}\frac{\partial}{\partial\xi_{i}}\frac{\partial}{\partial\eta_{i}}\frac{\partial U(\xi,m,t)}{\partial m}(\eta)=\text{tr }\Sigma(t,m_{1}).
\]

With these calculations, we can proceed on equation (\ref{eq:5.11})
and obtain 
\begin{equation*}
\begin{array}{l}
	\displaystyle\frac{1}{2}\int_{\mathbb{R}^{n}}\xi^{*}\frac{d}{dt}P(t)\xi m(\xi)d\xi+\frac{1}{2}y^{*}\frac{\partial}{\partial t}\Sigma(t,m_{1})y+\frac{\partial\lambda(t,m_{1})}{\partial t}\\
	\qquad+\displaystyle(\frac{1}{2}\text{tr}\, aP(t)+\frac{\beta^{2}}{2}\text{tr}P(t))m_{1}+\frac{1}{2}\beta^{2}\text{tr}\Sigma(t,m_{1})(m_{1})^{2}+\displaystyle\int_{\mathbb{R}^{n}}\bigg[\frac{1}{2}\xi^{*}(Q+\bar{Q})\xi-\xi^{*}\bar{Q}Sy+\frac{1}{2}y^{*}S^{*}\bar{Q}Sy\\
	\qquad-\displaystyle\frac{1}{2}(P(t)\xi+\Sigma(t,m_{1})y)^{*}BR^{-1}B^{*}(P(t)\xi+\Sigma(t,m_{1})y)+(P(t)\xi+\Sigma(t,m_{1})y)^{*}(A\xi+\bar{A}y)\bigg]m(\xi)d\xi=0.
	
\end{array}
\end{equation*}

We can identify terms and obtain 
\[
\frac{\partial\lambda(t,m_{1})}{\partial t}+(\frac{1}{2}\text{tr}\, aP(t)+\frac{\beta^{2}}{2}\text{tr}P(t))m_{1}+\frac{1}{2}\beta^{2}\text{tr}\Sigma(t,m_{1})(m_{1})^{2}=0.
\]

Therefore, from the final condition $\lambda(T,m_{1})=0,$ it follows that
\begin{equation}
\lambda(t,m_{1})=\int_{t}^{T}(\frac{1}{2}\text{tr}\, aP(s)+\frac{\beta^{2}}{2}\text{tr}P(s))ds\, m_{1}+\frac{1}{2}\beta^{2}\int_{t}^{T}\text{tr}\Sigma(s,m_{1})ds\,(m_{1})^{2}.\label{eq:5.16}
\end{equation}
 
Recalling that $y=\int_{\mathbb{R}^{n}}\xi m(\xi)d\xi$ and identifying quadratic terms in $\xi$ (within the integral) and in $y$ respectively, it follows easily that 
\begin{equation}
\left\{
\begin{array}{l}
\displaystyle\frac{dP}{dt}+PA+A^{*}P-PBR^{-1}B^{*}P+Q+\bar{Q}=0,\\
P(T)=Q_{T}+\bar{Q}_{T},\label{eq:5.17}\\
\end{array}
\right.
\end{equation}
and 
\begin{equation}
\left\{
\begin{array}{l}
\displaystyle\frac{d\Sigma}{dt}+\Sigma(A+\bar{A}m_{1}-BR^{-1}B^{*}P)+(A+\bar{A}m_{1}-BR^{-1}B^{*}P)^{*}\Sigma\label{eq:5.18}\\
\qquad\qquad -\displaystyle\Sigma BR^{-1}B^{*}\Sigma m_{1}+S^{*}\bar{Q}Sm_{1}-\bar{Q}S-S^{*}\bar{Q}+P\bar{A}+\bar{A}\,^{*}P=0;\\
\Sigma(T,m_{1})=S_{T}^{*}\bar{Q}_{T}S_{T}\, m_{1}-(S_{T}^{*}\bar{Q}_{T}+\bar{Q}_{T}S_{T}).
\end{array}
\right.
\end{equation}

We obtain formula (\ref{eq:5.13}) with the values of $P(t),\,\Sigma(t,m_{1}),\,\lambda(t,m_{1})$
given by equations (\ref{eq:5.17}), (\ref{eq:5.18}), (\ref{eq:5.16}). We next turn to the Master equation. The function $U(x,m,t)$ is given
by (\ref{eq:5.15}). Let us set $\Gamma(t,m_{1})=\frac{\partial\Sigma(t,m_{1})}{\partial m_{1}}$.
From (\ref{eq:5.18}) we obtain easily 

\begin{equation}
\left\{
\begin{array}{l}
\displaystyle\frac{d\Gamma}{dt}+\Gamma(A+\bar{A}m_{1}-BR^{-1}B^{*}(P+\Sigma m_{1}))+(A+\bar{A}m_{1}-BR^{-1}B^{*}(P+\Sigma m_{1}))^{*}\Gamma\label{eq:5.19}\\
\qquad\qquad\qquad\qquad\qquad\qquad\qquad\qquad\qquad\qquad +S^{*}\bar{Q}S-\Sigma BR^{-1}B^{*}\Sigma+\Sigma\bar{A}+\bar{A}\,^{*}\Sigma=0,\\
\Gamma(T,m_{1})=S_{T}^{*}\bar{Q}_{T}S_{T}.
\end{array}
\right.
\end{equation}

So we can write 
\begin{equation}
\label{eq:5.21}
\begin{array}{l}
U(x,m,t)=\displaystyle\frac{1}{2}x^{*}P(t)x+y^{*}\Sigma(t,m_{1})x+\frac{1}{2}y^{*}\Gamma(t,m_{1})y\\
\qquad\qquad\qquad\qquad+\displaystyle\int_{t}^{T}(\frac{1}{2}\text{tr}\, aP(s)+\frac{\beta^{2}}{2}\text{tr}P(s))ds\,+\frac{1}{2}\beta^{2}\int_{t}^{T}\text{tr}\Gamma(s,m_{1})ds\,(m_{1})^{2}+\beta^{2}\int_{t}^{T}\text{tr}\Sigma(s,m_{1})ds m_{1}.
\end{array}
\end{equation}

We want to check that this functional is the solution of the Master
equation (\ref{eq:2.8-1}). We rewrite the Master equation as follows under the linear quadratic setting

\begin{equation}
\label{eq:5.22}
\left\{
\begin{array}{l}
	-\displaystyle\frac{\partial U}{\partial t}-\frac{1}{2}\text{tr}\, aD^{2}U-\frac{1}{2}\beta^{2}\Delta U\\
	\qquad-\displaystyle\int_{\mathbb{R}^{n}}\Big(\frac{1}{2}\text{tr}\, aD_{\xi}^{2}\frac{\partial}{\partial m}U(x,m,t)(\xi)+\frac{1}{2}\beta^{2}\Delta_{\xi}\frac{\partial}{\partial m}U(x,m,t)(\xi)\Big)m(\xi)d\xi\\
	\qquad-\displaystyle\int_{\mathbb{R}^{n}}D_{\xi}\frac{\partial}{\partial m}U(x,m,t)(\xi).G(\xi,m,DU(\xi,m,t))m(\xi)d\xi\\
	\qquad-\displaystyle\frac{1}{2}\beta^{2}\int_{\mathbb{R}^{n}}\int_{\mathbb{R}^{n}}\sum_{i=1}^{n}\frac{\partial}{\partial\xi_{i}}\frac{\partial}{\partial\eta_{i}}(\frac{\partial^{2}U(x,m,t)}{\partial m^{2}}(\xi,\eta))m(\xi)m(\eta)d\xi d\eta-\beta^{2}\sum_{i=1}^{n}\frac{\partial}{\partial x_{i}}\left(\int_{\mathbb{R}^{n}}\frac{\partial}{\partial\xi_{i}}\frac{\partial U(x,m,t)}{\partial m}(\xi)m(\xi)d\xi\right)\\
	=\displaystyle\frac{1}{2}x^{*}(Q+\bar{Q})x-x^{*}(\bar{Q}S+S^{*}\bar{Q})y+\frac{1}{2}y^{*}S^{*}\bar{Q}Sy+y^{*}S^{*}\bar{Q}S\, xm_{1}\\
	\qquad-\displaystyle\frac{1}{2}(DU(x,m,t))^{*}BR^{-1}B^{*}DU(x,m,t)+(DU(x,m,t))^{*}(Ax+\bar{A}y)+\int_{\mathbb{R}^{n}}(DU(\xi,m,t))^{*}m(\xi)d\xi\,\bar{A}x,\\
	U(x,m,T)=\displaystyle\frac{1}{2}x^{*}(Q_{T}+\bar{Q}_{T})x-x^{*}(\bar{Q}_{T}S_{T}+S_{T}^{*}\bar{Q}_{T})y+\frac{1}{2}y^{*}S_{T}^{*}\bar{Q}_{T}S_{T}y+y^{*}S_{T}^{*}\bar{Q}_{T}S_{T}\, xm_{1}.\\
\end{array}
\right.
\end{equation}

We look for a solution of the form 
\begin{equation}
U(x,m,t)=\frac{1}{2}x^{*}P(t)x+y^{*}\Sigma(t,m_{1})x+\frac{1}{2}y^{*}\Gamma(t,m_{1})y+\mu(t,m_{1})\label{eq:5.24}
\end{equation}
with $\Sigma(t,m_{1}),\,\Gamma(t,m_{1})$ symmetric. We can calculate that
\begin{equation*}
\begin{split}
	DU(x,m,t)&=P(t)x+\Sigma(t,m_{1})y,\\
	D^{2}U(x,m,t)&=P(t),\\
	\displaystyle\frac{\partial}{\partial m}U(x,m,t)(\xi)&=(x^{*}\Sigma(t,m_{1})+y^{*}\Gamma(t,m_{1}))\xi+y^{*}\frac{\partial}{\partial m_{1}}\Sigma(t,m_{1})x\\
	&\qquad\qquad\qquad+\frac{1}{2}y^{*}\frac{\partial}{\partial m_{1}}\Gamma(t,m_{1})y+\frac{\partial}{\partial m_{1}}\mu(t,m_{1}),\\
	D_{\xi}\displaystyle\frac{\partial}{\partial m}U(x,m,t)(\xi)&=\Sigma(t,m_{1})x+\Gamma(t,m_{1})y,\\
	D_{\xi}^{2}\displaystyle\frac{\partial}{\partial m}U(x,m,t)(\xi)&=0,\\
	\displaystyle\frac{\partial^{2}}{\partial m^{2}}U(x,m,t)(\xi,\eta)&=\eta^{*}\Gamma(t,m_{1})\xi+x^{*}\frac{\partial}{\partial m_{1}}\Sigma(t,m_{1})(\xi+\eta)+y^{*}\displaystyle\frac{\partial}{\partial m_{1}}\Gamma(t,m_{1})(\xi+\eta)\\
	&\qquad\qquad\qquad+y^{*}\frac{\partial^{2}}{\partial m_{1}^{2}}\Sigma(t,m_{1})x+\frac{1}{2}y^{*}\frac{\partial^{2}}{\partial m_{1}^{2}}\Gamma(t,m_{1})y+\frac{\partial^{2}}{\partial m_{1}^{2}}\mu(t,m_{1}),\\
	\sum_{i=1}^{n}\displaystyle\frac{\partial}{\partial\xi_{i}}\frac{\partial}{\partial\eta_{i}}(\frac{\partial^{2}U(x,m,t)}{\partial m^{2}}(\xi,\eta))&=\text{tr }\Gamma(t,m_{1}),\\
	\sum_{i=1}^{n}\frac{\partial}{\partial x_{i}}\frac{\partial}{\partial\xi_{i}}(\frac{\partial}{\partial m}U(x,m,t)(\xi))&=\text{tr }\Sigma(t,m_{1}).
\end{split}
\end{equation*}

Substituting all these results in the Master equation (\ref{eq:5.22}) yields 
\begin{equation*}
\begin{array}{l}
	-(\displaystyle\frac{1}{2}x^{*}\frac{d}{dt}P(t)x+y^{*}\frac{d}{d}\Sigma(t,m_{1})x+\frac{1}{2}y^{*}\frac{d}{dt}\Gamma(t,m_{1})y+\frac{d}{dt}\mu(t,m_{1}))-(\frac{1}{2}\text{tr}\, aP(t)+\frac{\beta^{2}}{2}\text{tr}P(t))\\
	\qquad -(\Sigma(t,m_{1})x+\Gamma(t,m_{1})y)^{*}(Ay+\bar{A}ym_{1}-BR^{-1}B^{*}(Py+\Sigma ym_{1}))\\
	\qquad -\displaystyle\frac{\beta^{2}}{2}\text{tr }\Gamma(t,m_{1})m_{1}^{2}-\beta^{2}\text{tr }\Sigma(t,m_{1})m_{1}\\
	=\displaystyle\frac{1}{2}x^{*}(Q+\bar{Q})x-x^{*}(\bar{Q}S+S^{*}\bar{Q})y+\frac{1}{2}y^{*}S^{*}\bar{Q}Sy+y^{*}S^{*}\bar{Q}S\, xm_{1}\\
	\qquad-\displaystyle\frac{1}{2}(P(t)x+\Sigma(t,m_{1})y)^{*}BR^{-1}B^{*}(P(t)x+\Sigma(t,m_{1})y)\\
	\qquad+(P(t)x+\Sigma(t,m_{1})y)^{*}(Ax+\bar{A}y)+(P(t)y+\Sigma(t,m_{1})ym_{1})^{*}\bar{A}x.
\end{array}
\end{equation*}

Comparing coefficients, one checks easily that $P(t),\,\Sigma(t,m_{1}),\,\Gamma(t,m_{1})$
satisfy the equations (\ref{eq:5.17}), (\ref{eq:5.18}), (\ref{eq:5.19}). Hence $\Sigma(t,m_{1})$ is symmetric and $\Gamma(t,m_{1})=\frac{\partial\Sigma(t,m_{1})}{\partial m_{1}}.$
Also 
\begin{equation*}
	\frac{d}{dt}\mu(t,m_{1})+\frac{1}{2}\text{tr}\, aP(t)+\frac{\beta^{2}}{2}\text{tr}P(t)+\frac{\beta^{2}}{2}\text{tr }\Gamma(t,m_{1})m_{1}^{2}+\beta^{2}\text{tr }\Sigma(t,m_{1})m_{1}=0.
\end{equation*}

Therefore $\mu(t,m_{1})=\frac{\partial\lambda(t,m_{1})}{\partial m_{1}}$ and we recover the formula $U(x,m,t)=\frac{\partial V(m,t)}{\partial m}(x)$. We can state the following proposition 
\begin{prop}
\label{prop1} If we assume (\ref{eq:5.2}), then the solution of the Bellman equation (\ref{eq:5.8}) is given by formula (\ref{eq:5.13}) with $P(t),$$\Sigma(t,m_{1}),\lambda(t,m_{1})$
given respectively by equations (\ref{eq:5.17}), (\ref{eq:5.18}),
(\ref{eq:5.16}). The solution of the Master equation (\ref{eq:2.8-1}) is given by formula (\ref{eq:5.21}), in which $\Gamma(t,m_{1})$
is the solution of (\ref{eq:5.19}). 
\end{prop}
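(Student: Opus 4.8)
The plan is to prove the proposition by a direct verification argument tailored to the linear--quadratic structure: both the Bellman equation (\ref{eq:5.8}) and the Master equation (\ref{eq:2.8-1}) are shown to admit quadratic trial solutions whose coefficients are finite-dimensional matrices and scalars, so that the infinite-dimensional equations collapse to a small system of matrix ODEs. The argument splits into the two assertions of the statement, and the second is made consistent with the first through the identity $U=\frac{\partial V}{\partial m}$.

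For the Bellman equation, I would postulate the form (\ref{eq:5.13}) for $V(m,t)$ with $P(t)$ and $\Sigma(t,m_1)$ symmetric, and read off the terminal data (\ref{eq:5.14}) from the terminal condition in (\ref{eq:5.11}). Differentiating the ansatz once in $m$ gives $U(\xi,m,t)$ as in (\ref{eq:5.15}); from this I would compute $D_\xi U$, $D_\xi^2 U=P$, and the mixed trace $\sum_i \partial_{\xi_i}\partial_{\eta_i}\frac{\partial U}{\partial m}(\eta)=\mathrm{tr}\,\Sigma$. Inserting these into the linear--quadratic form (\ref{eq:5.11}) and using $y=\int_{\mathbb{R}^n}\xi\,m(\xi)\,d\xi$, I would then match coefficients by monomial type: the terms quadratic in $\xi$ under the integral yield the symmetric Riccati equation (\ref{eq:5.17}) for $P$, the terms quadratic in $y$ yield the $m_1$-dependent Riccati equation (\ref{eq:5.18}) for $\Sigma$, and the residual $m_1$- and $m_1^2$-proportional terms integrate, subject to the terminal value $\lambda(T,m_1)=0$, to give (\ref{eq:5.16}). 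A necessary side check is that (\ref{eq:5.18}) preserves the symmetry of $\Sigma$, since this is precisely what makes the symmetry property (\ref{eq:4.8}) of $\frac{\partial^2 V}{\partial m^2}$ hold and the construction self-consistent.

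For the Master equation I would first observe that, writing $\Gamma(t,m_1):=\frac{\partial\Sigma(t,m_1)}{\partial m_1}$, differentiating the Riccati equation (\ref{eq:5.18}) in the mass $m_1$ produces the linear (Lyapunov-type) equation (\ref{eq:5.19}) for $\Gamma$, with terminal value $\Gamma(T,m_1)=S_T^*\bar Q_T S_T$; this furnishes the explicit candidate (\ref{eq:5.21}) for $U$. To verify that (\ref{eq:5.21}) actually solves the Master equation, I would treat it through the \emph{independent} quadratic ansatz (\ref{eq:5.24}), compute all the ingredients appearing in (\ref{eq:5.22}) --- namely $DU$, $D^2U$, $\frac{\partial U}{\partial m}(\xi)$, $D_\xi\frac{\partial U}{\partial m}(\xi)$, the second $m$-derivative $\frac{\partial^2 U}{\partial m^2}(\xi,\eta)$, and the traces $\mathrm{tr}\,\Gamma$ and $\mathrm{tr}\,\Sigma$ --- and substitute them into (\ref{eq:5.22}). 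Matching the coefficients of $x^*(\cdot)x$, $y^*(\cdot)x$, $y^*(\cdot)y$ and the constant term, together with the terminal condition, would recover exactly (\ref{eq:5.17}), (\ref{eq:5.18}) and (\ref{eq:5.19}), plus a scalar ODE for $\mu(t,m_1)$ whose solution is $\mu=\frac{\partial\lambda}{\partial m_1}$. This last identification confirms that the verified $U$ coincides with $\frac{\partial V}{\partial m}$, closing the loop with the first part.

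The genuinely routine part is the volume of algebra in expanding the quadratic forms and collecting like terms; I would keep it tractable by organizing every computation according to the four monomial types above. The step I expect to be the main obstacle is the careful bookkeeping of the $m_1$-dependence: because $\Sigma$, $\Gamma$ and $\lambda$ all depend on the total mass $m_1$, each application of $\frac{\partial}{\partial m}$ generates additional $\frac{\partial\Sigma}{\partial m_1}$, $\frac{\partial^2\Sigma}{\partial m_1^2}$ (and analogous $\Gamma$, $\lambda$) contributions, and one must assign the cross terms arising from $\bar A y$, $S^*\bar Q S\,y$ and the $\beta^2$ trace terms to the correct matrix equation. Maintaining the symmetry of $\Sigma$ and $\Gamma$ throughout, and confirming the consistency relations $\Gamma=\frac{\partial\Sigma}{\partial m_1}$ and $\mu=\frac{\partial\lambda}{\partial m_1}$, is what reconciles the two ans\"atze and completes the proof.
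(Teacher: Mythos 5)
Your proposal is correct and follows essentially the same route as the paper: the quadratic ansatz (\ref{eq:5.13}) for $V$ with coefficient matching to obtain (\ref{eq:5.17}), (\ref{eq:5.18}), (\ref{eq:5.16}), then the independent ansatz (\ref{eq:5.24}) substituted into the LQ form (\ref{eq:5.22}) of the Master equation, recovering (\ref{eq:5.19}) and the identifications $\Gamma=\frac{\partial\Sigma}{\partial m_{1}}$, $\mu=\frac{\partial\lambda}{\partial m_{1}}$, hence $U=\frac{\partial V}{\partial m}$. Your side checks (symmetry preservation of $\Sigma$ and the property (\ref{eq:4.8})) are exactly the consistency observations the paper makes as well.
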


\subsection{\label{sub:Mean Field Type}SYSTEM OF STOCHASTIC HJB-FP EQUATIONS
FOR MEAN FIELD TYPE CONTROL }

We now consider the solution of the stochastic Fokker-Planck equation 
\begin{equation}
\left\{
\begin{array}{l}
\partial_{t}m+(\mathcal{A}^{*}m-\frac{1}{2}\beta^{2}\Delta m+\text{div}(G(x,m,DU)m))dt+\beta Dm db(t)=0,\label{eq:5.25}\\
m(x,0)=m_{0}(x),
\end{array}
\right.
\end{equation}
for the linear quadratic Mean Field type control problem, assuming the
initial condition $m_{0}$ is Gaussian with mean $\bar{x}_{0}$ and
covariance matrix $\Pi_{0}.$ If we call $m(t)=m(x,t)$ the solution,
then 
\[
m_{1}(t)=\int_{\mathbb{R}^{n}}m(\xi,t)d\xi=1,\: y(t)=\int_{\mathbb{R}^{n}}\xi m(\xi,t)d\xi.
\]

Next 
\[
DU(x,m(t),t)=P(t)x+\Sigma(t)y(t),\;\Sigma(t)=\Sigma(t,1),
\]
\[
G(x,m,DU(x,m(t),t))=(A-BR^{-1}B^{*}P(t))x+(\bar{A}-BR^{-1}B^{*}\Sigma(t))y(t).
\]

From (\ref{eq:5.25}) by using the test function $x$, and then integrating in $x,$ we get easily that 
\begin{equation}
\label{eq:5.26}
\left\{
\begin{array}{l}
	dy=(A+\bar{A}-BR^{-1}B^{*}P(t))y(t)dt-BR^{-1}B^{*}\Sigma(t)y(t)dt+\beta db(t),\\
	y(0)=\bar{x}_{0}.\\
\end{array}
\right.
\end{equation}

We define 
\[
u(x,t)=U(x,m(t),t)=\frac{1}{2}x^{*}P(t)x+x^{*}r(t)+s(t),
\]
with 
\begin{equation}
\label{add_1}
r(t)=\Sigma(t)y(t),\: s(t)=\frac{1}{2}y(t)^{*}\Gamma(t)y(t)+\mu(t),
\end{equation}
in which $\Gamma(t)=\Gamma(t,1)$ and $\mu(t)=\mu(t,1)$. An easy
calculation, by taking account of (\ref{eq:5.18}) and (\ref{eq:5.26}) yields 
\begin{equation}
\label{eq:5.27}
\left\{
\begin{array}{l}
	-dr=(A^{*}+\bar{A}^{*}-PBR^{-1}B^{*})rdt+(S^{*}\bar{Q}S-\bar{Q}S-S^{*}\bar{Q}+P\bar{A}+\bar{A}\,^{*}P)ydt-\beta\Sigma db(t),\\
	r(T)=(S_{T}^{*}\bar{Q}_{T}S_{T}\,{\bf 1}-S_{T}^{*}\bar{Q}_{T}-\bar{Q}_{T}S_{T})y(T),
\end{array}
\right.
\end{equation}
and we can rewrite (\ref{eq:5.26}) as 

\begin{equation}
\label{eq:5.28}
\left\{
\begin{split}
dy&=(A+\bar{A}-BR^{-1}B^{*}P(t))y(t)dt-BR^{-1}B^{*}r(t)dt+\beta db(t),\\
y(0)&=\bar{x}_{0},\\
\end{split}
\right.
\end{equation}
and the pair $(y(t),r(t))$ becomes the solution of a system of forward-backward SDE. Considering the fundamental matrix $\Phi_{P}(t,s)$ associated to the matrix $A+\bar{A}-BR^{-1}B^{*}P(t)$: 
\begin{equation*}
\left\{
\begin{array}{l}
	\displaystyle\frac{\partial\Phi_{P}(t,s)}{\partial t}=(A+\bar{A}-BR^{-1}B^{*}P(t))\Phi_{P}(t,s),\,\forall t>s,\\
	\Phi_{P}(s,s)=I,
\end{array}
\right.
\end{equation*}
then we can write 
\begin{equation*}
\begin{array}{l}
	r(t)=\displaystyle\Phi_{P}^{*}(T,t)(S_{T}^{*}\bar{Q}_{T}S_{T}\,{\bf 1}-S_{T}^{*}\bar{Q}_{T}-\bar{Q}_{T}S_{T})y(T)\\
	\qquad\qquad+\displaystyle\int_{t}^{T}\Phi_{P}^{*}(s,t)(S^{*}\bar{Q}S-\bar{Q}S-S^{*}\bar{Q}+P(s)\bar{A}+\bar{A}\,^{*}P(s))y(s)ds-\beta\int_{t}^{T}\Phi_{P}^{*}(s,t)\Sigma(s)db(s).
\end{array}
\end{equation*}
 This relation implies 
\begin{equation}
\begin{split}
	&r(t)=\Phi_{P}^{*}(T,t)(S_{T}^{*}\bar{Q}_{T}S_{T}\,{\bf 1}-S_{T}^{*}\bar{Q}_{T}-\bar{Q}_{T}S_{T}) \mathbb{E} ^{\mathcal{B}^{t}}y(T)\label{eq:5.29}\\
	&\qquad\qquad+\int_{t}^{T}\Phi_{P}^{*}(s,t)(S^{*}\bar{Q}S-\bar{Q}S-S^{*}\bar{Q}+P(s)\bar{A}+\bar{A}\,^{*}P(s)) \mathbb{E} ^{\mathcal{B}^{t}}y(s)ds.
\end{split}
\end{equation}
 In the system (\ref{eq:5.28}), (\ref{eq:5.29}) the external function
$\Sigma(s)$ does not appear anymore, but $r(t)$ is the solution
of an integral equation, instead of an backward SDE. Finally, from (\ref{add_1}), by taking differentiation and then integrating from $t$ to $T$, we have 
\begin{align}
s(t) & =\frac{1}{2}y(T)^{*}S_{T}^{*}\bar{Q}_{T}S_{T}y(T)+\int_{t}^{T}(\frac{1}{2}\text{tr}\, aP(s)+\frac{\beta^{2}}{2}\text{tr}P(s)+\beta^{2}\text{tr }\Sigma(s))ds \nonumber \\
&\qquad+ \int_{t}^{T}(\frac{1}{2}y(s)^{*}S^{*}QSy(s)-\frac{1}{2}r(s)^{*}BR^{-1}B^{*}r(s)+r(s)^{*}\bar{A}y(s))ds-\beta\int_{t}^{T}y(s)^{*}\Gamma(s)db(s). \label{eq:5.30-1}
\end{align}

Since $s(t)$ is adapted to $\mathcal{B}^{t}$, we can write 
\begin{align}
s(t) & =\int_{t}^{T}(\frac{1}{2}\text{tr}\, aP(s)+\frac{\beta^{2}}{2}\text{tr}P(s)+\beta^{2}\text{tr }\Sigma(s))ds+\frac{1}{2} \mathbb{E} ^{\mathcal{B}^{t}}y(T)^{*}S_{T}^{*}\bar{Q}_{T}S_{T}y(T)\label{eq:5.31-1}\\
&\qquad+  \mathbb{E} ^{\mathcal{B}^{t}}\int_{t}^{T}(\frac{1}{2}y(s)^{*}S^{*}QSy(s)-\frac{1}{2}r(s)^{*}BR^{-1}B^{*}r(s)+r(s)^{*}\bar{A}y(s))ds.\nonumber 
\end{align}

The results obtained contains that in Bensoussan et al. \cite{BSYY} as a special case when $\beta=0$.

\subsection{MEAN FIELD GAMES MASTER EQUATION }

The Master equation (\ref{eq:4.7}) under the LQ setting is
\begin{equation}
\label{eq:5.31}
\left\{
\begin{array}{l}
	-\displaystyle\frac{\partial U}{\partial t}-\frac{1}{2}\text{tr}\, aD^{2}U-\frac{1}{2}\beta^{2}\Delta U\\
	\qquad-\displaystyle\int_{\mathbb{R}^{n}}\Big(\frac{1}{2}\text{tr}\, aD_{\xi}^{2}\frac{\partial}{\partial m}U(x,m,t)(\xi)+\frac{1}{2}\beta^{2}\Delta_{\xi}\frac{\partial}{\partial m}U(x,m,t)(\xi)\Big)m(\xi)d\xi\\
	\qquad -\displaystyle\int_{\mathbb{R}^{n}}D_{\xi}\frac{\partial}{\partial m}U(x,m,t)(\xi).G(\xi,m,DU(\xi,m,t))m(\xi)d\xi\\
	\qquad-\displaystyle\frac{1}{2}\beta^{2}\int_{\mathbb{R}^{n}}\int_{\mathbb{R}^{n}}\sum_{i=1}^{n}\frac{\partial}{\partial\xi_{i}}\frac{\partial}{\partial\eta_{i}}(\frac{\partial^{2}U(x,m,t)}{\partial m^{2}}(\xi,\eta))m(\xi)m(\eta)d\xi d\eta-\beta^{2}\displaystyle\sum_{i=1}^{n}\frac{\partial}{\partial x_{i}}\left(\int_{\mathbb{R}^{n}}\frac{\partial}{\partial\xi_{i}}\frac{\partial U(x,m,t)}{\partial m}(\xi)m(\xi)d\xi\right)\\
	=\displaystyle\frac{1}{2}x^{*}(Q+\bar{Q})x-x^{*}\bar{Q}Sy+\frac{1}{2}y^{*}S^{*}\bar{Q}Sy\\
	\qquad -\displaystyle\frac{1}{2}(DU(x,m,t))^{*}BR^{-1}B^{*}DU(x,m,t)+(DU(x,m,t))^{*}(Ax+\bar{A}y),\\
	U(x,m,T)=\displaystyle\frac{1}{2}x^{*}(Q_{T}+\bar{Q}_{T})x-x^{*}\bar{Q}_{T}S_{T}y+\frac{1}{2}y^{*}S_{T}^{*}\bar{Q}_{T}S_{T}y.
\end{array}
\right.
\end{equation}

We look for a solution of the form 
\begin{equation}
U(x,m,t)=\frac{1}{2}x^{*}P(t)x+x^{*}\Sigma(t,m_{1})y+\frac{1}{2}y^{*}\Gamma(t,m_{1})y+\mu(t,m_{1}).\label{eq:5.33}
\end{equation}

Using the terminal condition, we cannot have a symmetric $\Sigma(t,m_{1})$ this time. 

We can also compute the derivatives
\begin{equation*}
\begin{split}
	DU(x,m,t)&=P(t)x+\Sigma(t,m_{1})y,\\
	D^2U(x,m,t)&=P(t),\\
	\frac{\partial}{\partial m}U(x,m,t)(\xi)&=x^{*}\Sigma(t,m_{1})\xi+y^{*}\Gamma(t,m_{1})\xi+x^{*}\frac{\partial}{\partial m_{1}}\Sigma(t,m_{1})y\\
	&\qquad+\frac{1}{2}y^{*}\frac{\partial}{\partial m_{1}}\Gamma(t,m_{1})y+\frac{\partial}{\partial m_{1}}\mu(t,m_{1}),\\
	D_{\xi}\frac{\partial}{\partial m}U(x,m,t)(\xi)&=\Sigma^{*}(t,m_{1})x+\Gamma(t,m_{1})y,\\
	D_{\xi}^{2}\frac{\partial}{\partial m}U(x,m,t)(\xi)&=0,\\
	\frac{\partial^{2}}{\partial m^{2}}U(x,m,t)(\xi,\eta)&=(x^{*}\frac{\partial}{\partial m_{1}}\Sigma(t,m_{1})+y^{*}\frac{\partial}{\partial m_{1}}\Gamma(t,m_{1}))(\xi+\eta)\\
	&\qquad +\eta^{*}\Gamma(t,m_{1})\xi+x^{*}\frac{\partial^{2}}{\partial m_{1}^{2}}\Sigma(t,m_{1})y+\frac{1}{2}y^{*}\frac{\partial^{2}}{\partial m_{1}^{2}}\Gamma(t,m_{1})y+\frac{\partial^{2}}{\partial m_{1}^{2}}\mu(t,m_{1}),\\
	\sum_{i=1}^{n}\frac{\partial}{\partial\xi_{i}}\frac{\partial}{\partial\eta_{i}}(\frac{\partial^{2}U(x,m,t)}{\partial m^{2}}(\xi,\eta))&=\text{tr}\Gamma(t,m_{1}),\\
	\sum_{i=1}^{n}\frac{\partial}{\partial x_{i}}\frac{\partial}{\partial\xi_{i}}\frac{\partial U(x,m,t)}{\partial m}(\xi)&=\text{tr}\Sigma(t,m_{1}).
\end{split}
\end{equation*}

We apply these formulas in the Master equation (\ref{eq:5.31}) to obtain
\begin{equation*}
\begin{array}{l}
	-(\displaystyle\frac{1}{2}x^{*}\frac{d}{dt}P(t)x+x^{*}\frac{d}{dt}\Sigma(t,m_{1})y+\frac{1}{2}y^{*}\frac{d}{dt}\Gamma(t,m_{1})y+\frac{d}{dt}\mu(t,m_{1}))-(\frac{1}{2}\text{tr} aP(t)+\frac{1}{2}\beta^{2}\text{tr} P(t))\\
	\qquad-(\Sigma^{*}(t,m_{1})x+\Gamma(t,m_{1})y)^{*}(Ay+\bar{A}ym_{1}-BR^{-1}B^{*}(Py+\Sigma ym_{1}))\\
	\qquad-\displaystyle\frac{\beta^{2}}{2}\text{tr}\Gamma(t,m_{1})m_{1}^{2}-\beta^{2}\text{tr}\Sigma(t,m_{1})m_{1}\\
	=\displaystyle\frac{1}{2}x^{*}(Q+\bar{Q})x-x^{*}\bar{Q}Sy+\frac{1}{2}y^{*}S^{*}\bar{Q}Sy\\
	\qquad -\displaystyle\frac{1}{2}(P(t)x+\Sigma(t,m_{1})y)^{*}BR^{-1}B^{*}(P(t)x+\Sigma(t,m_{1})y)\\
	\qquad+(P(t)x+\Sigma(t,m_{1})y)^{*}(Ax+\bar{A}y).
\end{array}
\end{equation*}
Comparing coefficients, we obtain 
\begin{equation}
\left\{
\begin{array}{l}
\displaystyle\frac{d}{dt}P(t)+PA+A^{*}P-PBR^{-1}B^{*}P+Q+\bar{Q}=0,\label{eq:5.38}\\
P(T)=Q_{T}+\bar{Q}_{T}.
\end{array}
\right.
\end{equation}
\begin{equation}
\left\{
\begin{array}{l}
\displaystyle\frac{d\Sigma}{dt}+\Sigma(A+\bar{A}m_{1}-BR^{-1}B^{*}P)+(A^{*}-PBR^{-1}B^{*})\Sigma-\Sigma BR^{-1}B^{*}\Sigma m_{1}-\bar{Q}S+P\bar{A}=0,\label{eq:5.39}\\
\Sigma(T,m_{1})=-\bar{Q}_{T}S_{T}.
\end{array}
\right.
\end{equation}
\begin{equation}
\left\{
\begin{array}{l}
\displaystyle\frac{d\Gamma}{dt}+\Gamma(A+\bar{A}m_{1}-BR^{-1}B^{*}(P+\Sigma m_{1}))+(A+\bar{A}m_{1}-BR^{-1}B^{*}(P+\Sigma m_{1}))^{*}\Gamma\label{eq:5.40}\\
\qquad+S^{*}\bar{Q}S-\Sigma BR^{-1}B^{*}\Sigma+\Sigma\bar{A}+\bar{A}\,^{*}\Sigma=0,\\
\Gamma(T,m_{1})=S_{T}^{*}\bar{Q}_{T}S_{T}.
\end{array}
\right.
\end{equation}
\begin{equation}
\left\{
\begin{array}{l}
\displaystyle\frac{d}{dt}\mu(t,m_{1})+\frac{1}{2}\text{tr}\, aP(t)+\frac{\beta^{2}}{2}\text{tr}P(t)+\frac{\beta^{2}}{2}\text{tr }\Gamma(t,m_{1})m_{1}^{2}+\beta^{2}\text{tr }\Sigma(t,m_{1})m_{1}=0,\label{eq:5.41}\\
\mu(T,m_{1})=0.
\end{array}
\right.
\end{equation}
 
\begin{rem}
The function $\Gamma(t,m_{1})$ is no more the derivative of $\Sigma(t,m_{1})$
with respect to $m_{1}.$ \end{rem}
\begin{prop}
\label{prop2} If we assume (\ref{eq:5.2}) then the solution of the Master equation (\ref{eq:5.31})
is given by formula (\ref{eq:5.33}), with $P(t)$ solution of (\ref{eq:5.38}), $\Sigma(t,m_{1})$ solution of (\ref{eq:5.39}), $\Gamma(t,m_{1})$ solution of (\ref{eq:5.40}) and $\mu(t,m_{1})$ solution of (\ref{eq:5.41}). 
\end{prop}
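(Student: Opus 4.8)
The plan is to prove the proposition by direct verification: I would substitute the quadratic ansatz (\ref{eq:5.33}) into the mean-field-games Master equation (\ref{eq:5.31}) and show that it is a solution exactly when $P,\Sigma,\Gamma,\mu$ satisfy (\ref{eq:5.38})--(\ref{eq:5.41}). Since the ansatz is at most quadratic in the pair $(x,y)$, all of its derivatives --- namely $DU$, $D^{2}U$, $\frac{\partial}{\partial m}U(x,m,t)(\xi)$, $D_\xi\frac{\partial}{\partial m}U(x,m,t)(\xi)$, the second measure-derivative $\frac{\partial^{2}}{\partial m^{2}}U(x,m,t)(\xi,\eta)$, and the two traces --- are themselves polynomials in $(x,y,\xi,\eta)$ with coefficients built from $P,\Sigma,\Gamma,\mu$ and their $m_1$-derivatives. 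The first step is therefore to insert these expressions into the left- and right-hand sides of (\ref{eq:5.31}) and to carry out the $\xi$- and $\eta$-integrations against $m$; because $U$ is quadratic, these integrations replace $\int\xi\,m\,d\xi$ by $y$ and $\int m\,d\xi$ by $m_1$, so that after simplification every term becomes a polynomial of degree at most two in $(x,y)$ whose coefficients depend on $t$ and on the scalar $m_1$.

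The core of the argument is then a coefficient match, organised by the algebraic type of each monomial. The pure quadratic-in-$x$ terms reproduce the classical Riccati equation (\ref{eq:5.38}) for $P$, identical to the mean-field-type-control case because the $x$-only block of $U$ carries no mean-field coupling. The terms bilinear in $x$ and $y$ collect to give the nonsymmetric equation (\ref{eq:5.39}) for $\Sigma$; the pure quadratic-in-$y$ terms give (\ref{eq:5.40}) for $\Gamma$; and the remaining scalar terms, which gather the traces $\tfrac12\text{tr}\,aP$, $\tfrac{\beta^{2}}{2}\text{tr}\,P$, $\tfrac{\beta^{2}}{2}\text{tr}\,\Gamma\,m_1^{2}$ and $\beta^{2}\text{tr}\,\Sigma\,m_1$, give the scalar ODE (\ref{eq:5.41}) for $\mu$. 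Each terminal condition is obtained separately by comparing the ansatz evaluated at $t=T$ with the prescribed terminal datum $U(x,m,T)$ in (\ref{eq:5.31}), which yields $P(T)=Q_T+\bar Q_T$, $\Sigma(T,m_1)=-\bar Q_T S_T$, $\Gamma(T,m_1)=S_T^{*}\bar Q_T S_T$ and $\mu(T,m_1)=0$.

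The step demanding the most care --- and precisely the one that separates this proposition from Proposition \ref{prop1} --- is the bookkeeping of transposes. Here the transport term involves $D_\xi\frac{\partial}{\partial m}U(x,m,t)(\xi)=\Sigma^{*}(t,m_1)x+\Gamma(t,m_1)y$, so $\Sigma$ enters the $x$-coefficient through its transpose; when this is paired against $G(\xi,m,DU)=A\xi+\bar A y-BR^{-1}B^{*}DU$ and integrated, the bilinear contribution fails to be symmetric. I would therefore keep $\Sigma$ and $\Sigma^{*}$ strictly distinct throughout the bilinear match and resist any premature symmetrisation; this is exactly what produces the asymmetric combination $A^{*}\Sigma$ together with $P\bar A$ in (\ref{eq:5.39}) and explains why $\Sigma(t,m_1)$ is not symmetric and why $\Gamma(t,m_1)\neq\partial\Sigma/\partial m_1$, as noted in the preceding remark. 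Once the matching is done with this care, (\ref{eq:5.38})--(\ref{eq:5.41}) follow; existence and uniqueness of the solution to this decoupled hierarchy are then routine --- $P$ solves a standard symmetric Riccati equation, $\Sigma$ a Riccati-type equation parametrised by $m_1$ (the quadratic term $-\Sigma BR^{-1}B^{*}\Sigma\,m_1$ handled by the usual solvability theory under the cost-data positivity implicit in (\ref{eq:5.2})), $\Gamma$ a linear matrix equation once $P,\Sigma$ are known, and $\mu$ a direct quadrature --- completing the verification.
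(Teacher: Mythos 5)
Your proposal is correct and follows essentially the same route as the paper: substitute the quadratic ansatz (\ref{eq:5.33}) into (\ref{eq:5.31}), compute the derivatives (including the key $D_{\xi}\frac{\partial}{\partial m}U(x,m,t)(\xi)=\Sigma^{*}(t,m_{1})x+\Gamma(t,m_{1})y$ with the transpose), integrate against $m$, and match coefficients of the $x$-quadratic, $x$--$y$ bilinear, $y$-quadratic and scalar blocks to obtain (\ref{eq:5.38})--(\ref{eq:5.41}) together with the terminal conditions. Your emphasis on keeping $\Sigma$ and $\Sigma^{*}$ distinct is exactly the point that makes $\Sigma$ nonsymmetric and $\Gamma\neq\partial\Sigma/\partial m_{1}$ here, as the paper notes in the remark preceding the proposition.
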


\subsection{SYSTEM OF STOCHASTIC HJB-FP EQUATIONS FOR MEAN FIELD GAMES }

We consider in the LQ case the system of stochastic HJB-FP equations for Mean Field games, namely (\ref{eq:4.3}) and (\ref{eq:4.5}). We first consider the FP equation, which is the same as for the Mean Field type control, namely (\ref{eq:5.25}). With the notation of
Section \ref{sub:Mean Field Type}, we have 
\begin{equation*}
\left\{
\begin{array}{l}
dy=(A+\bar{A}-BR^{-1}B^{*}P(t))y(t)dt-BR^{-1}B^{*}r(t)dt+\beta db(t),\\
y(0)=\bar{x}_{0};
\end{array}
\right.
\end{equation*}
with $r(t)=\Sigma(t)y(t)$ and $\Sigma(t)=\Sigma(t,1)$. We obtain
that $r(t)$ satisfies 
\begin{equation*}
\left\{
\begin{array}{l}
-dr=(A^{*}-PBR^{-1}B^{*})rdt+(\bar{Q}S-P\bar{A})ydt-\beta\Sigma db(t),\\
r(T)=-\bar{Q}_{T}S_{T}y(T);
\end{array}
\right.
\end{equation*}
and 
\begin{equation*}
u(x,t)=U(x,m(t),t)=\frac{1}{2}x^{*}P(t)x+x^{*}r(t)+s(t).
\end{equation*}

We have 
\begin{equation*}
s(t)=\frac{1}{2}y^{*}\Gamma(t)y+\mu(t).
\end{equation*}

Since $\Gamma(t),y(t),\mu(t)$ are solutions of equations identical
to the Mean Field type control, we again obtain 
\begin{align}
s(t) & =\frac{1}{2}y(T)^{*}S_{T}^{*}\bar{Q}_{T}S_{T}y(T)+\int_{t}^{T}(\frac{1}{2}\text{tr}\, aP(s)+\frac{\beta^{2}}{2}\text{tr}P(s)+\beta^{2}\text{tr }\Sigma(s))ds\nonumber \\
&\qquad+ \int_{t}^{T}(\frac{1}{2}y(s)^{*}S^{*}QSy(s)-\frac{1}{2}r(s)^{*}BR^{-1}B^{*}r(s)+r(s)^{*}\bar{A}y(s))ds-\beta\int_{t}^{T}y(s)^{*}\Gamma(s)db(s).\nonumber 
\end{align}

The result coincides with that in Bensoussan \cite{BSYY} when $\beta=0$.

\section{NASH EQUILIBRIUM FOR A FINITE NUMBER OF PLAYERS }

\subsection{THE PROBLEM FOR A FINITE NUMBER OF PLAYERS}

We consider here $N$ players. Each of them has a state $x^{i}(t)\in \mathbb{R}^{n}$.
We define the vector of states $x(t)\in \mathbb{R}^{nN}$ as 
\[
x(t)=(x^{1}(t),\cdots,x^{N}(t)).
\]

We denote $x_{l}^{i}(t),l=1,\cdots,n,$ the components of the state
$x^{i}(t).$ The states evolve according to the following dynamics 
\begin{equation}
\left\{
\begin{array}{l}
dx^{i}(s) =g(x,v^{i}(x))ds+\sigma(x^{i})dw^{i}(s)+\beta db(s),\; s>t,\label{eq:6.1}\\
x^{i}(t) =x^{i};
\end{array}
\right. 
\end{equation}
 in which $v^{i}(x)\in \mathbb{R}^{d}$ is the control of player $i$. The
processes $w^{i}(s)$ and $b(s)$ are independent standard Wiener
processes in $\mathbb{R}^{n}$. We set, as usual, $a(x^{i})=\sigma(x^{i})\sigma(x^{i})^{*}$.
It is convenient to denote $v(x)=(v^{1}(x),\cdots,v^{N}(x)).$ We
introduce the cost functional of each player 
\begin{equation}
J^{i}(x,t;v(\cdot))= \mathbb{E} [\int_{t}^{T}f(x(s),v^{i}(x(s))ds+h(x(T))]\label{eq:6.2}.
\end{equation}

 We notice that the trajectories and cost functionals are linked only through the states and not through the controls. Consider the Hamiltonian 
\begin{equation}
H(x,q)=\inf_{v}[f(x,v)+q.g(x,v)].\label{eq:6.3}
\end{equation}

Denote $\hat{v}(x,q)$ the minimizer in the Hamiltonian, then we set $G(x,q)=g(x,\hat{v}(x,q))$.
We next consider the system of PDEs, $i=1, \cdots, N,$ 
\begin{equation}
\left\{
\begin{array}{l}
-\displaystyle\frac{\partial u^{i}}{\partial t}-\frac{1}{2}\sum_{j=1}^{N}\text{tr}(a(x^{j})D_{x^{j}}^{2}u^{i})-\frac{\beta^{2}}{2}\text{tr}\sum_{j,k=1}^{N}D_{x^{j}x^{k}}^{2}u^{i}-\sum_{j\not=i}D_{x^{j}}u^{i}.G(x,D_{x^{j}}u^{j})=H(x,D_{x^{i}}u^{i}),\label{eq:6.4}\\
u^{i}(x,T)=h(x),
\end{array}
\right. 
\end{equation}
 and define the feedbacks 
\begin{equation}
\hat{v}^{i}(x)=\hat{v}(x,D_{x^{i}}u^{i}(x)),\label{eq:6.5}
\end{equation}
which form a Nash equilibrium for the differential game (\ref{eq:6.1}),
(\ref{eq:6.2}). This means first that 

\begin{equation}
u^{i}(x,t)=J^{i}(x,t;\hat{v}(\cdot)),\label{eq:6.6}
\end{equation}
 and if we use the notation 
\[
v(x)=(v^{i}(x),\bar{v}^{i}(x)),
\]
 in which $\bar{v}^{i}(x)$ represents all the components of $v(x)$,
except $v^{i}(x)$, to emphasize the control of player $i$, then 
\begin{equation}
u^{i}(x,t)\leq J^{i}(x,t;v^{i}(x),\bar{\hat{v}}^{i}(x)),\:\forall v^{i}(x)\label{eq:6.7}.
\end{equation}

We next apply this framework in the following case. Consider $f(x,m,v)$, $g(x,m,v),h(x,m)$ as in the previous sections (with $x\in \mathbb{R}^{n})$, and assume this time that the argument $m$ is no more an element of $L^{2}(\mathbb{R}^{n})$ but a probability measure in $\mathbb{R}^{n}$. We define 
\begin{align*}
f(x,v^{i}(x)) & =f(x^{i},\frac{1}{N-1}\sum_{j\not=i}\delta_{x^{j}},v^{i}(x)),\quad \, g(x,v^{i}(x))=g(x^{i},\frac{1}{N-1}\sum_{j\not=i}\delta_{x^{j}},v^{i}(x)),\\
h(x) & =h(x^{i},\frac{1}{N-1}\sum_{j\not=i}\delta_{x^{j}}),
\end{align*}
in which $x=(x^{1},\cdots,x^{N}).$ The Hamiltonian becomes 
\[
H(x^{i},\frac{1}{N-1}\sum_{j\not=i}\delta_{x^{j}},q)=\inf_{v}(f(x^{i},\frac{1}{N-1}\sum_{j\not=i}\delta_{x^{j}},v)+q.g(x^{i},\frac{1}{N-1}\sum_{j\not=i}\delta_{x^{j}},v)).
\]

Consider the optimal feedback $\hat{v}(x^{i},\frac{1}{N-1}\sum_{j\not=i}\delta_{x^{j}},q)$,
we have 
\[
G(x^{i},\frac{1}{N-1}\sum_{j\not=i}\delta_{x^{j}},q)=g(x^{i},\frac{1}{N-1}\sum_{j\not=i}\delta_{x^{j}},\hat{v}(x^{i},\frac{1}{N-1}\sum_{j\not=i}\delta_{x^{j}},q)).
\]

Hence the system of PDEs as in (\ref{eq:6.4}) becomes, for $i=1, \cdots, N,$ 
\begin{equation}
\label{eq:6.8}
\left\{
\begin{array}{l}
-\displaystyle\frac{\partial u^{i}}{\partial t}-\frac{1}{2}\sum_{j=1}^{N}\text{tr}(a(x^{j})D_{x^{j}}^{2}u^{i})-\frac{\beta^{2}}{2}\text{tr}\sum_{j,k=1}^{N}D_{x^{j}x^{k}}^{2}u^{i}-\sum_{j\not=i}D_{x^{j}}u^{i}.G(x^{i},\frac{1}{N-1}\sum_{k\not=j}\delta_{x^{k}},D_{x^{j}}u^{j})\\
\qquad=H(x^{i},\frac{1}{N-1}\displaystyle\sum_{j\not=i}\delta_{x^{j}},D_{x^{i}}u^{i}), \\
u^{i}(x,T)=h(x,\frac{1}{N-1}\displaystyle\sum_{j\not=i}\delta_{x^{j}}).\\ 
\end{array}
\right.
\end{equation}

We want to interpret (\ref{eq:6.8}) as a Master equation, which is
the way P.L. Lions has introduced the concept of Master equation. 

\subsection{\label{sub:DISCUSSION-ON-THE}DISCUSSION ON THE DERIVATIVE WITH RESPECT
TO A PROBABILITY MEASURE. }

For a functional $F(m)$ where $m$ is in $L^{2}(\mathbb{R}^{n})$, we
have defined the concept of derivative by simply using the concept of G\^ateaux differentiability. When $m$ represents the density of
a probability measure, the functional becomes a functional of a probability
measure, but of a special kind. Suppose that $F$ extends to a general
probability measure, then the concept of differentiability does not
extend. For instance if $x^{j}$ is a point in $\mathbb{R}^{n}$, neither can the concept
of differentiability be extended to $F(\delta_{x^{j}}),$ nor
more generally to $F(\frac{\sum_{j=1}^{K}\delta_{x^{j}}}{K}).$ 

Nevertheless, we may have the following situation: the functional
$F(m)$ is differentiable in $m$, and the function 
\begin{equation}
\Phi(x)=\Phi(x^{1},\cdots,x^{N})=F(\frac{\sum_{j=1}^{K}\delta_{x^{j}}}{K})\label{eq:6.100}
\end{equation}
is differentiable in $x$. Note that differentiability refers to two different set up, one with respect to arguments in $L^{2}(\mathbb{R}^{n})$ and the other
one, with respect to arguments in $\mathbb{R}^{nN}.$ We want to study the
link between these two concepts. 
\begin{prop}
\label{prop:6.1} Assume that $F(m)$ and $\Phi(x)$ are sufficiently
differentiable in their own sense, and that expressions below are well-defined. Then we
have the following relations 
\begin{equation}
\int_{\mathbb{R}^{n}}D_{\xi}\frac{\partial F(m)}{\partial m}(\xi).B(\xi)m(\xi)d\xi=\sum_{j=1}^{K}D_{x^{j}}\Phi(x).B(x^{j});\label{eq:6.101}
\end{equation}
\begin{equation}
\int_{\mathbb{R}^{n}}\Delta_{\xi}\frac{\partial F(m)}{\partial m}(\xi)m(\xi)d\xi+\int_{\mathbb{R}^{n}}\int_{\mathbb{R}^{n}}\text{tr}D_{\xi}D_{\eta}(\frac{\partial^{2}F(m)}{\partial m^{2}}(\xi,\eta))m(\xi)m(\eta)d\xi d\eta=\sum_{j,k=1}^{K}\text{tr}(D_{x^{j}}D_{x^{k}}\Phi(x));\label{eq:6.102}
\end{equation}
\begin{equation}
\int_{\mathbb{R}^{n}}\text{tr}(a(\xi)D_{\xi}^{2}(\frac{\partial F(m)}{\partial m}(\xi)))m(\xi)d\xi \approx \sum_{j=1}^{K}\text{tr}(a(x^{j})D_{x^{j}}^{2}\Phi(x)),\:\text{for large } K\label{eq:6.103}; 
\end{equation}
where $m = \frac{1}{K} \sum_{j=1}^K \delta_{x^j}$.
\end{prop}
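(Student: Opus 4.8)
The plan is to reduce the whole proposition to one first-order chain-rule identity and then to differentiate it once more. First I would fix an index $j$ and a direction $e\in\mathbb{R}^{n}$ and perturb only the point $x^{j}$ to $x^{j}+\theta e$. Since $m=\frac{1}{K}\sum_{l=1}^{K}\delta_{x^{l}}$, this perturbs the measure by $\frac{1}{K}(\delta_{x^{j}+\theta e}-\delta_{x^{j}})$, so that $\frac{d}{d\theta}m|_{\theta=0}=-\frac{1}{K}\,e\cdot\nabla\delta_{x^{j}}$ in the distributional sense. Inserting this into the defining relation of the G\^ateaux derivative $\frac{\partial F}{\partial m}$ and integrating by parts against the Dirac mass yields the fundamental identity
\[
D_{x^{j}}\Phi(x)=\frac{1}{K}\Big[D_{\xi}\frac{\partial F(m)}{\partial m}(\xi)\Big]_{\xi=x^{j}},\qquad m=\frac{1}{K}\sum_{l=1}^{K}\delta_{x^{l}},
\]
from which the entire proposition follows.

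For (\ref{eq:6.101}) I would simply substitute $m=\frac{1}{K}\sum_{l}\delta_{x^{l}}$ into the left-hand integral; it collapses against each Dirac mass to $\frac{1}{K}\sum_{l}D_{\xi}\frac{\partial F(m)}{\partial m}(x^{l})\cdot B(x^{l})$, and replacing $D_{\xi}\frac{\partial F(m)}{\partial m}(x^{l})$ by $K\,D_{x^{l}}\Phi(x)$ via the fundamental identity produces exactly $\sum_{l}D_{x^{l}}\Phi(x)\cdot B(x^{l})$.

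For the second-order formulas I would differentiate the fundamental identity once more in $x^{k}$, keeping track of the fact that its right-hand side depends on $x^{k}$ through two channels: through the measure $m$ (for every $k$) and through the evaluation point $\xi=x^{j}$ (only when $k=j$). The first channel, treated by the second-order G\^ateaux derivative $\frac{\partial^{2}F}{\partial m^{2}}$ exactly as above, contributes $\frac{1}{K^{2}}D_{\xi}D_{\eta}\frac{\partial^{2}F(m)}{\partial m^{2}}(x^{j},x^{k})$, while the second contributes the diagonal term $\delta_{jk}\,\frac{1}{K}D_{\xi}^{2}\frac{\partial F(m)}{\partial m}(x^{j})$. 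Taking the trace, summing over all $j,k$, and using $\frac{1}{K}\sum_{j}(\cdots)(x^{j})=\int(\cdots)(\xi)m(\xi)d\xi$ together with $\frac{1}{K^{2}}\sum_{j,k}(\cdots)(x^{j},x^{k})=\int\!\int(\cdots)(\xi,\eta)m(\xi)m(\eta)d\xi d\eta$ and $\text{tr}\,D_{\xi}^{2}=\Delta_{\xi}$ reproduces (\ref{eq:6.102}) exactly. For (\ref{eq:6.103}) I would instead insert the matrix $a(x^{j})$ before tracing only the diagonal blocks $D_{x^{j}}^{2}\Phi$: the diagonal part reproduces $\int\text{tr}(a(\xi)D_{\xi}^{2}\frac{\partial F(m)}{\partial m}(\xi))m(\xi)d\xi$ exactly, whereas the surviving piece equals $\frac{1}{K}\int\text{tr}(a(\xi)D_{\xi}D_{\eta}\frac{\partial^{2}F(m)}{\partial m^{2}}(\xi,\xi))m(\xi)d\xi=O(1/K)$, which is precisely the term discarded in the approximation for large $K$.

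The main obstacle is not the algebra but its legitimacy. The objects $\frac{\partial F}{\partial m}$ and $\frac{\partial^{2}F}{\partial m^{2}}$ were defined by G\^ateaux differentiation on $L^{2}(\mathbb{R}^{n})$, yet the empirical measure $\frac{1}{K}\sum_{l}\delta_{x^{l}}$ is singular and, as noted in Section \ref{sub:DISCUSSION-ON-THE}, lies outside the domain where those derivatives were constructed. The manipulations with $\nabla\delta_{x^{j}}$ and the integrations by parts are therefore formal, to be justified only under the standing hypothesis that $F$ extends smoothly enough for the displayed expressions to be well-defined (for instance by mollifying the Diracs and passing to the limit). This is also why (\ref{eq:6.103}) can only be an approximation: the cross term carrying $\frac{\partial^{2}F}{\partial m^{2}}$ cannot be absorbed and merely vanishes to leading order in $K$.
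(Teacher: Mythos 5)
Your argument is correct at the paper's (admittedly formal) level of rigor, but it takes a genuinely different route from the paper's proof. The paper proceeds dynamically: it differentiates $s\mapsto F(m(s))$ along three separate flows of the atoms --- the deterministic flow $\frac{dx^{j}}{ds}=B(x^{j}(s))$ for (\ref{eq:6.101}), the common Brownian shift $x^{j}(s)=x^{j}+\beta b(s)$ for (\ref{eq:6.102}), and independent diffusions driven by $w^{j}$ for (\ref{eq:6.103}) --- computing $\frac{d}{ds}F(m(s))|_{s=0}$ (resp.\ $\frac{d}{ds}\mathbb{E}F(m(s))|_{s=0}$) once by the chain rule/It\^o calculus in the measure variable and once through $\Phi(x^{1}(s),\dots,x^{K}(s))$, and matching the two; the approximation in (\ref{eq:6.103}) is then explained probabilistically, via the law of large numbers (the empirical measure of independent diffusions satisfies no closed equation but is close to its mean for large $K$). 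You instead work statically: perturbing a single atom yields the pointwise identity $D_{x^{j}}\Phi(x)=\frac{1}{K}D_{\xi}\frac{\partial F(m)}{\partial m}(x^{j})$, one further differentiation yields $D_{x^{j}}D_{x^{k}}\Phi(x)=\frac{1}{K^{2}}D_{\xi}D_{\eta}\frac{\partial^{2}F(m)}{\partial m^{2}}(x^{j},x^{k})+\delta_{jk}\frac{1}{K}D_{\xi}^{2}\frac{\partial F(m)}{\partial m}(x^{j})$, and all three relations follow by contraction and summation; your algebra checks out. What your route buys is unification and precision: a single identity generates everything, (\ref{eq:6.102}) is seen to be exact because the diagonal terms are tracked on both sides, and the error in (\ref{eq:6.103}) is exhibited explicitly as $\frac{1}{K}\int_{\mathbb{R}^{n}}\text{tr}\big(a(\xi)D_{\xi}D_{\eta}\frac{\partial^{2}F(m)}{\partial m^{2}}(\xi,\xi)\big)m(\xi)d\xi=O(1/K)$ rather than argued qualitatively --- this even sharpens the paper's illustrative example $F(m)=\int\int\Gamma(u,v)m(u)m(v)\,du\,dv$, where exactness of (\ref{eq:6.103}) actually requires the mixed derivative $D_{u}D_{v}\Gamma$ to vanish on the diagonal, a cross-term the paper's computation silently drops. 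What the paper's route buys is interpretability: each test flow corresponds precisely to the term of the Master equation (transport, common noise, idiosyncratic noise) in which the relation is later invoked in Proposition \ref{prop5}, and it never differentiates a Dirac mass directly, only evaluates $F$ and its G\^ateaux derivatives along curves of empirical measures --- though, as you rightly stress, both arguments rest on the same formal extension of those derivatives outside $L^{2}(\mathbb{R}^{n})$.
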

\begin{proof}
We first comment how to understand these relations. The left hand
side makes sense for sufficiently smooth functionals $F(m).$ The
results are functionals of $m$, defined on $L^{2}(\mathbb{R}^{n}).$ Suppose that 
we can interpret them in the case $m$ is replaced by $\frac{\sum_{j=1}^{K}\delta_{x^{j}}}{K}.$
We obtained functions of $x=(x^{1},\cdots,x^{N})\in \mathbb{R}^{nN}.$ The
statement tells that these functions are identical to those on the
right hand side, in which $\Phi(x)$ is defined by (\ref{eq:6.100}).

For (\ref{eq:6.103}), it is only an approximation valid for large
$K.$ To illustrate, consider the particular case:
\begin{equation}
F(m)=\int_{\mathbb{R}^{n}}\int_{\mathbb{R}^{n}}\Gamma(u,v)m(u)m(v)dudv\label{eq:6.104}
\end{equation}
in which $\Gamma(u,v)=\Gamma(v,u)$ is twice continuously
differentiable. We have 
\begin{equation*}
\begin{split}
	\frac{\partial  F(m)}{\partial m}(\xi)&=2\int_{\mathbb{R}^{n}}\Gamma(\xi,v)m(v)dv,\\
	D_{\xi}\frac{\partial F(m)}{\partial m}(\xi)&=2\int_{\mathbb{R}^{n}}D_{\xi}\Gamma(\xi,v)m(v)dv,\\
	D_{\xi}^{2}(\frac{\partial  F(m)}{\partial m}(\xi))&=2\int_{\mathbb{R}^{n}}D_{\xi}^{2}\Gamma(\xi,v)m(v)dv,\\
	\frac{\partial ^{2}F(m)}{\partial m^{2}}(\xi,\eta)&=2\Gamma(\xi,\eta),\\
	D_{\xi}D_{\eta}\frac{\partial ^{2}F(m)}{\partial m^{2}}(\xi,\eta)&=2D_{\xi}D_{\eta}\Gamma(\xi,\eta).\\
\end{split}
\end{equation*}

Hence, we obtain
\begin{equation*}
\begin{split}
&\int_{\mathbb{R}^{n}}D_{\xi}\frac{\partial F(m)}{\partial m}(\xi).B(\xi)m(\xi)d\xi=2\int_{\mathbb{R}^{n}}\int_{\mathbb{R}^{n}}D_{u}\Gamma(u,v).B(u)m(u)m(v)dudv;\\
&\int_{\mathbb{R}^{n}}\Delta_{\xi}\frac{\partial F(m)}{\partial m}(\xi)m(\xi)d\xi+\int_{\mathbb{R}^{n}}\int_{\mathbb{R}^{n}}\text{tr}D_{\xi}D_{\eta}(\frac{\partial^{2}F(m)}{\partial m^{2}}(\xi,\eta))m(\xi)m(\eta)d\xi d\eta\\
&\qquad=2\int_{\mathbb{R}^{n}}\int_{\mathbb{R}^{n}}\Delta_{u}\Gamma(u,v)m(u)m(v)dudv+2\int_{\mathbb{R}^{n}}\int_{\mathbb{R}^{n}}\text{tr}D_{u}D_{v}\Gamma(u,v)m(u)m(v)dudv;\\
&\int_{\mathbb{R}^{n}}\text{tr}(a(\xi)D_{\xi}^{2}(\frac{\partial F(m)}{\partial m}(\xi))m(\xi)d\xi=2\int_{\mathbb{R}^{n}}\int_{\mathbb{R}^{n}}\text{tr}(a(u)D_{u}^{2}\Gamma(u,v))m(u)m(v)dudv.\\
\end{split}
\end{equation*}

We now apply these formulas with $m(x)=\frac{\sum_{j=1}^{K}\delta_{x^{j}}(x)}{K}$ which yields 
\begin{equation*}
\begin{split}
&\int_{\mathbb{R}^{n}}D_{\xi}\frac{\partial F(m)}{\partial m}(\xi).B(\xi)m(\xi)d\xi=\frac{2}{K^{2}}\sum_{j,k=1}^{K}D_{x^{j}}\Gamma(x^{j},x^{k}).B(x^{j});\\
&\int_{\mathbb{R}^{n}}\Delta_{\xi}\frac{\partial F(m)}{\partial m}(\xi)m(\xi)d\xi+\int_{\mathbb{R}^{n}}\int_{\mathbb{R}^{n}}\text{tr}D_{\xi}D_{\eta}(\frac{\partial^{2}F(m)}{\partial m^{2}}(\xi,\eta))m(\xi)m(\eta)d\xi d\eta\\
&\qquad=\frac{2}{K^{2}}\sum_{j,k=1}^{K}\Delta_{x^{j}}\Gamma(x^{j},x^{k})+\frac{2}{K^{2}}\sum_{j\not=k=1}^{K}\text{tr}(D_{x^{j}}D_{x^{k}}\Gamma(x^{j},x^{k}));
\\
&\int_{\mathbb{R}^{n}}\text{tr}(a(\xi)D_{\xi}^{2}(\frac{\partial F(m)}{\partial m}(\xi))m(\xi)d\xi=\frac{2}{K^{2}}\sum_{j,k=1}^{K}\text{tr}(a(x^{j})D_{x^{j}}^{2}\Gamma(x^{j},x^{k})).\\
\end{split}
\end{equation*}

We have to be careful in interpreting these formulas: $D_{x^{j}}\Gamma(x^{j},x^{k})$ represents the gradient with respect
to the first argument, even when $k=j$. We have the similar convention for $\Delta_{x^{j}}\Gamma(x^{j},x^{k})$ or $D_{x^{j}}^{2}\Gamma(x^{j},x^{k})$. Observe that under this particular case, $\Phi(x)=\frac{1}{K^{2}}\sum_{j,k=1}^{K}\Gamma(x^{j},x^{k})$. Hence, we have
\begin{equation*}
D_{x^{j}}\Phi(x)=\frac{2}{K^{2}}\sum_{k=1}^{K}D_{x^{j}}\Gamma(x^{j},x^{k}),
\end{equation*}
and therefore 
\[
\sum_{j=1}^{K}D_{x^{j}}\Phi(x).B(x^{j})=\frac{2}{K^{2}}\sum_{j=1}^{K}\sum_{k=1}^{K}D_{x^{j}}\Gamma(x^{j},x^{k}).B(x^{j}) ,
\]
which proves (\ref{eq:6.101}). 

With $\Phi(x)=\frac{1}{K^{2}}\sum_{j\not=k=1}^{K}\Gamma(x^{j},x^{k})+\frac{1}{K^{2}}\sum_{j=1}^{K}\Gamma(x^{j},x^{j})$, we consider next 
\[
D_{x^{j}}D_{x^{k}}\Phi(x)=\frac{2}{K^{2}}D_{x^{j}}D_{x^{k}}\Gamma(x^{j},x^{k}),\:\text{if}\: j\not=k,
\]
and 
\begin{align*}
\Delta_{x^{j}}\Phi(x) =\frac{2}{K^{2}}\sum_{k\not=j=1}^{K}\Delta_{x^{j}}\Gamma(x^{j},x^{k})+\frac{1}{K^{2}}\Delta_{x^{j}}\Gamma(x^{j},x^{j})=\frac{2}{K^{2}}\sum_{k=1}^{K}\Delta_{x^{j}}\Gamma(x^{j},x^{k}).
\end{align*}
Hence,
\begin{equation*}
\begin{split}
	\sum_{j,k=1}^{K}\text{tr}(D_{x^{j}}D_{x^{k}}\Phi(x))&=\sum_{j\not=k=1}^{K}\text{tr}(D_{x^{j}}D_{x^{k}}\Phi(x))+\sum_{j=1}^{K}\Delta_{x^{j}}\Phi(x)\\
	&=\frac{2}{K^{2}}\sum_{j\not=k=1}^{K}\text{tr}(D_{x^{j}}D_{x^{k}}\Gamma(x^{j},x^{k}))+\frac{2}{K^{2}}\sum_{j,k=1}^{K}\Delta_{x^{j}}\Gamma(x^{j},x^{k}) , 
\end{split}
\end{equation*}
and thus (\ref{eq:6.102}) is obtained. 

Finally, observe that  
\[
D_{x^{j}}^{2}\Phi(x)=\frac{2}{K^{2}}\sum_{k=1}^{K}D_{x^{j}}^{2}\Gamma(x^{j},x^{k}),
\]
and hence
\[
\sum_{j=1}^{K}\text{tr}(a(x^{j})D_{x^{j}}^{2}\Phi(x))=\frac{2}{K^{2}}\sum_{j,k=1}^{K}\text{tr}(a(x^{j})D_{x^{j}}^{2}\Gamma(x^{j},x^{k}))
\]
 which also implies (\ref{eq:6.103}). We note that (\ref{eq:6.103}) is exact under this particular case. 

We begin by proving (\ref{eq:6.101}) in the general case. Consider the probability measure defined by
\[
m(x,s)=\frac{1}{K}\sum_{j=1}^{K}\delta_{x^{j}(s)}(x),
\]
where $x^{j}(s)$ satisfies 
\begin{equation*}
\left\{
\begin{split}
&\frac{dx^{j}(s)}{ds}=B(x^{j}(s)),\\
&x^{j}(0) =x^{j}.
\end{split}
\right.
\end{equation*}

We have 
\[
m(x,0)=m(x)=\frac{\sum_{j=1}^{K}\delta_{x^{j}}(x)}{K}.
\]

The probability measure satisfies the degenerate
Fokker Planck equation in the weak sense
\[
\frac{\partial m}{\partial s}+\text{div}(mB)=0 ,
\]
and one checks easily that, from the differentiability of $F(m)$ 
\[
\frac{d}{ds}F(m(s))|_{s=0}=\int_{\mathbb{R}^{n}}D_{\xi}\frac{\partial F(m)}{\partial m}(\xi)B(\xi)m(\xi)d\xi.
\]
 On the other hand, by the definition of $\Phi(x)$, one has 
\[
F(m(s))=\Phi(x^{1}(s),\cdots,x^{K}(s))
\]
and thus 
\[
\frac{d}{ds}F(m(s))|_{s=0}=\sum_{j=1}^{K}D_{x^{j}}\Phi(x).B(x^{j})
\]
 and (\ref{eq:6.101}) is obtained. 

We now turn to (\ref{eq:6.102}). We again consider the probability
distribution $m(x,s)=\frac{1}{K}\sum_{j=1}^{K}\delta_{x^{j}(s)}(x)$,
with this time 
\[
x^{j}(s)=x^{j}+\beta b(s).
\]

We can check that the probability distribution $m(x,s)$ satisfies
the stochastic partial differential equation 
\begin{equation}
\label{eq:6.112}
\left\{
\begin{split}
&\partial_{s}m(x,s)-\frac{\beta^{2}}{2}\Delta m(x,s) ds+\beta Dm(x,s).db(s)=0,\\
&m(x,0)=\frac{1}{K}\sum_{j=1}^{K}\delta_{x^{j}}(x).
\end{split}
\right.
\end{equation}

Indeed, this is obtained by taking a test function and writing 
\[
\int\varphi(\xi)m(\xi,s)d\xi=\frac{1}{K}\sum_{j=1}^{K}\varphi(x^{j}(s)) .
\]
 We then expand the right hand side, by using It\^o's formula, and obtain 

\[
d\int\varphi(\xi)m(\xi,s)d\xi=\beta\frac{1}{K}\sum_{j=1}^{K}D\varphi(x^{j}(s))db(s)+\frac{\beta^{2}}{2}\frac{1}{K}\sum_{j=1}^{K}\Delta\varphi(x^{j}(s))ds.
\]

Hence we have 
\begin{align*}
d\int\varphi(\xi)m(\xi,s)d\xi & =\beta\int m(\xi,s)D\varphi(\xi).db(s)+\frac{\beta^{2}}{2}\int m(\xi,s)\Delta\varphi(\xi)d\xi ds\\
&=-\beta  \int\varphi(\xi)Dm(\xi,s).db(s)d\xi+\frac{\beta^{2}}{2}\int\varphi(\xi)\Delta m(\xi,s)d\xi ds
\end{align*}
 and (\ref{eq:6.112}) follows immediately. We next write 
\begin{equation*}
\begin{split}
	&dF(m(s))=\int_{\mathbb{R}^{n}}\frac{\partial F(m(s))}{\partial m}(\xi)[-\beta Dm(\xi,s).db(s)d\xi+\frac{\beta^{2}}{2}\Delta m(\xi,s)d\xi ds]\\
	&\qquad\qquad\qquad\qquad+\frac{\beta^{2}}{2}\int_{\mathbb{R}^{n}}\int_{\mathbb{R}^{n}}\frac{\partial^{2} F(m)}{\partial m^{2}}(\xi,\eta)Dm(\xi,s)Dm(\eta,s)d\xi d\eta ds.
\end{split}
\end{equation*}

It follows that 
\begin{equation*}
\begin{split}
	&dF(m(s))\\
	=&\beta\int_{\mathbb{R}^{n}}D_{\xi}\frac{\partial F(m(s))}{\partial m}(\xi)m(\xi,s).db(s)d\xi\\
	&\qquad+\frac{\beta^{2}}{2}\int_{\mathbb{R}^{n}}\int_{\mathbb{R}^{n}}D_{\xi}D_{\eta}\frac{\partial^{2} F(m)}{\partial m^{2}}(\xi,\eta)m(\xi,s)m(\eta,s)d\xi d\eta ds+\frac{\beta^{2}}{2}\int_{\mathbb{R}^{n}}\Delta_{\xi}\frac{\partial F(m(s))}{\partial m}(\xi)m(\xi,s)d\xi ds.
\end{split}
\end{equation*}

Hence
\begin{equation}
\label{eq:6.113}
	\frac{d}{ds} \mathbb{E} F(m(s))|_{s=0}=\frac{\beta^{2}}{2}\bigg(\int_{\mathbb{R}^{n}}\Delta_{\xi}\frac{\partial F(m)}{\partial m}(\xi)m(\xi)d\xi+\int_{\mathbb{R}^{n}}\int_{\mathbb{R}^{n}}D_{\xi}D_{\eta}\frac{\partial^{2} F(m)}{\partial m^{2}}(\xi,\eta)m(\xi)m(\eta)d\xi d\eta \bigg).
\end{equation}

On the other hand 
\[
F(m(s))=\Phi(x^{1}(s),\cdots,x^{K}(s))
\]
 and by It\^o's formula again 
\[
dF(m(s))=\beta\sum_{j=1}^{K}D_{x^{j}}\Phi(x(s)).db(s)+\frac{\beta^{2}}{2}\sum_{j,k}\text{tr}D_{x^{j}}D_{x^{k}}\Phi(x(s))ds,
\]
which implies 
\[
\frac{d}{ds} \mathbb{E}\Big[ F(m(s))\Big]\bigg|_{s=0}=\frac{\beta^{2}}{2}\sum_{j,k}\text{tr}D_{x^{j}}D_{x^{k}}\Phi(x).
\]
 Comparing with (\ref{eq:6.113}), we obtain the formula (\ref{eq:6.102}).
 
We finally prove (\ref{eq:6.103}). We have $m(x,s)=\frac{1}{K}\sum_{j=1}^{K}\delta_{x^{j}(s)}(x),$
with 

\[
x^{j}(s)=x^{j}+\sigma(x^{j}(s))dw^{j}(s)
\]
 in which the $w^{j}(s)$ are independent standard Wiener processes
in $\mathbb{R}^{n}.$ We can write 
\begin{align}
\label{eq:6.114}
d\int\varphi(\xi)m(\xi,s)d\xi =\frac{1}{K}\sum_{j=1}^{K}D\varphi(x^{j}(s)).\sigma(x^{j}(s))dw^{j}(s)+\frac{1}{2} & \int m(\xi,s)\text{tr}(a(\xi)D^{2}\varphi(\xi))d\xi ds.
\end{align}
 It is not possible, this time, to write a closed form equation for
$m(x,s).$ However, for $K$ large, since the processes $x^{j}(s)$
are independent, $m(x,s)$ is close to its average, which satisfies the Fokker-Planck equation in a weak sense:
\[
\frac{\partial m}{\partial s}=\frac{1}{2}\sum_{\alpha,\beta=1}^{n}\frac{\partial^{2}(a_{\alpha\beta}(\xi)m)}{\partial\xi_{\alpha}\partial\xi_{\beta}}.
\]
Next, consider then $F(m(s)).$ We check easily that
\begin{equation}
\frac{d}{ds}F(m(s))|_{s=0}=\frac{1}{2}\int_{\mathbb{R}^{n}}\text{tr}(a(\xi)D_{\xi}^{2}(\frac{\partial F(m)}{\partial m}(\xi))m(\xi)d\xi.\label{eq:6.115}
\end{equation}

Since $F(m(s))=\Phi(x^{1}(s),\cdots,x^{K}(s))$, we can write 
\[
F(m(s)) \approx  \mathbb{E} \Phi(x^{1}(s),\cdots,x^{K}(s)),
\]
and from It\^o's formula we get easily 
\[
\frac{d}{ds}F(m(s))|_{s=0} \approx \frac{1}{2}\sum_{j=1}^{K}\text{tr}(a(x^{j})D_{x^{j}}^{2}\Phi(x)),
\]
and the result (\ref{eq:6.103}) follows.
\end{proof}

\subsection{INTERPRETATION OF THE MASTER EQUATION FOR MEAN FIELD GAMES}

Consider equation (\ref{eq:4.7}) which we write as follows
\begin{equation}
\label{eq:6.20}
\left\{
\begin{split}
	&-\frac{\partial U}{\partial t}-\frac{1}{2}\text{tr}(a(x)D^{2}U)-\frac{1}{2}\beta^{2}\Delta U-\int_{\mathbb{R}^{n}}\Big(\frac{1}{2}\text{tr}a(\xi)D_{\xi}^{2}\frac{\partial U(x,m,t)}{\partial m}(\xi)+\frac{1}{2}\beta^{2}\Delta_{\xi}\frac{\partial U(x,m,t)}{\partial m}(\xi)\Big)m(\xi)d\xi\\
	&\qquad\qquad-\int_{\mathbb{R}^{n}}D_{\xi}\frac{\partial U(x,m,t)}{\partial m}(\xi).G(\xi,m,DU(\xi))\, m(\xi)d\xi-\beta^{2}\int_{\mathbb{R}^{n}}\text{tr}D_{x}D_{\xi}(\frac{\partial U(x,m,t)}{\partial m}(\xi))m(\xi)d\xi\\
	&\qquad\qquad-\frac{1}{2}\beta^{2}\int_{\mathbb{R}^{n}}\int_{\mathbb{R}^{n}}\text{tr}D_{\xi}D_{\eta}(\frac{\partial^{2}U(x,m,t)}{\partial m^{2}}(\xi,\eta))m(\xi)m(\eta)d\xi d\eta\\
	&\qquad=H(x,m,DU(x)),\\
	&U(x,m,T)=h(x,m).\\	
\end{split}
\right.
\end{equation}

We have the following proposition:
\begin{prop}
\label{prop5} If the solution of (\ref{eq:6.20}) $U(x,m,t)$ is
sufficiently smooth, then the functions 

\begin{equation}
u_{i}(x,t)=u_{i}(x^{1},\cdots,x^{N},t)=U(x^{i},\frac{\sum_{j\not=i}\delta_{x^{j}}(\cdot)}{N-1},t), \text{for } i=1, \cdots, N, \label{eq:6.21}
\end{equation}
 satisfy approximately the system of relations (\ref{eq:6.8}). 
\end{prop}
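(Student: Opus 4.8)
The plan is to substitute the ansatz (\ref{eq:6.21}) into the finite-player system (\ref{eq:6.8}) and to match it, term by term, with the Master equation (\ref{eq:6.20}) evaluated at the spatial point $x=x^i$ and at the empirical measure $m_i=\frac{1}{N-1}\sum_{j\neq i}\delta_{x^j}$, using the dictionary supplied by Proposition \ref{prop:6.1} with $K=N-1$. The crucial bookkeeping observation is that $u_i=U(x^i,m_i,t)$ depends on the components of $x$ in two distinct ways: through the first (spatial) slot of $U$, and — for the remaining components — only through $m_i$. Since $m_i$ omits the atom $x^i$, a derivative $D_{x^i}$ acts purely through the spatial slot, whereas a derivative $D_{x^j}$ with $j\neq i$ acts purely through $m_i$. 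Comparing the two expressions for $\frac{d}{ds}F(m(s))$ in the proof of (\ref{eq:6.101}) with an arbitrary vector field gives the pointwise rule $D_{x^j}\Phi(x)=\frac{1}{N-1}D_\xi\frac{\partial F}{\partial m}(\xi)\big|_{\xi=x^j}$ for $\Phi(x)=F(m_i)$, which converts every measure-derivative of $u_i$ into a spatial derivative of $\frac{\partial U}{\partial m}$ integrated against $m_i$.

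With this in hand the easy terms dispatch immediately. The time derivative and the terminal data match directly, since $U(x^i,m_i,T)=h(x^i,m_i)=u^i(x,T)$, and the right-hand side is $H(x^i,m_i,D_{x^i}u^i)$ with $D_{x^i}u^i=DU(x^i)$. In the second-order block $-\tfrac12\sum_{j}\text{tr}(a(x^j)D_{x^j}^2u^i)$, the summand $j=i$ reproduces $-\tfrac12\text{tr}(a(x)D^2U)$ because $D_{x^i}^2u^i=D^2U$, while the sum over $j\neq i$ is governed by (\ref{eq:6.103}) and yields $-\tfrac12\int\text{tr}(a(\xi)D_\xi^2\frac{\partial U}{\partial m}(\xi))m(\xi)d\xi$ up to the large-$K$ error built into that relation.

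For the $\beta^2$ block $-\tfrac{\beta^2}{2}\sum_{j,k}\text{tr}(D^2_{x^jx^k}u^i)$ I would split the double sum three ways. The diagonal term $j=k=i$ gives $-\tfrac12\beta^2\Delta U$. The mixed pairs with exactly one index equal to $i$ contribute, after using symmetry of mixed partials and the pointwise rule above, $-\beta^2\int\text{tr}(D_xD_\xi\frac{\partial U}{\partial m}(\xi))m(\xi)d\xi$, which is exactly the cross term in (\ref{eq:6.20}). The block with both indices $\neq i$ is precisely the content of (\ref{eq:6.102}) applied to $F(m)=U(x^i,m,t)$, producing $-\tfrac12\beta^2\big(\int\Delta_\xi\frac{\partial U}{\partial m}(\xi)m(\xi)d\xi+\int\!\int\text{tr}\,D_\xi D_\eta\frac{\partial^2U}{\partial m^2}(\xi,\eta)m(\xi)m(\eta)d\xi d\eta\big)$, matching the remaining two $\beta^2$ terms on the nose.

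The coupling term is where the word \emph{approximately} genuinely enters, and it is the main obstacle. Applying (\ref{eq:6.101}) with the fixed field $B(\xi)=G(\xi,m_i,DU(\xi,m_i,t))$ turns $-\int D_\xi\frac{\partial U}{\partial m}(\xi)\cdot G(\xi,m_i,DU(\xi))m_i(\xi)d\xi$ into $-\sum_{j\neq i}D_{x^j}u^i\cdot G(x^j,m_i,DU(x^j,m_i,t))$, the natural drift of player $j$ being evaluated at $x^j$. To identify this with the coupling written in (\ref{eq:6.8}) I must replace $G(x^j,m_i,DU(x^j,m_i,t))$ by $G(x^j,m_j,D_{x^j}u^j)$, with $m_j=\frac{1}{N-1}\sum_{k\neq j}\delta_{x^k}$ and $D_{x^j}u^j=DU(x^j,m_j,t)$. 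This identification is not a consequence of Proposition \ref{prop:6.1} alone: the measures $m_i$ and $m_j$ differ by a single Dirac mass (the atom $x^j$ lies in $m_i$, the atom $x^i$ lies in $m_j$), an $O(1/N)$ perturbation, and one must quantify the resulting discrepancy between $DU(\cdot,m_i,t)$ and the genuine adjoint $D_{x^j}u^j$ using the smoothness of $U$ and $G$. Together with the large-$K$ approximation in (\ref{eq:6.103}), this single-atom swap is the sole source of inexactness, which is exactly why (\ref{eq:6.21}) satisfies (\ref{eq:6.8}) only \emph{approximately} while every other term matches identically.
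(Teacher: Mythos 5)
Your proposal is correct and follows the same strategy as the paper's proof: evaluate the Master equation (\ref{eq:6.20}) at $x=x^{i}$, $m=m_{i}=\frac{1}{N-1}\sum_{j\neq i}\delta_{x^{j}}$, and convert each term into derivatives of $u^{i}$ through the dictionary of Proposition \ref{prop:6.1}; your three-way split of the $\beta^{2}$ block (diagonal, mixed, both indices $\neq i$) reproduces exactly the paper's list of equivalences. The one place where you genuinely depart from the paper is the coupling term, and there your accounting is the finer one. The paper asserts the \emph{exact} equivalence
\begin{equation*}
\int_{\mathbb{R}^{n}}D_{\xi}\frac{\partial U(x^{i},m_{i},t)}{\partial m}(\xi)\cdot G(\xi,m_{i},DU(\xi,m_{i},t))\,m_{i}(\xi)d\xi=\sum_{j\neq i}D_{x^{j}}u^{i}\cdot G\Big(x^{j},\tfrac{1}{N-1}\sum_{k\neq j}\delta_{x^{k}},D_{x^{j}}u^{j}\Big),
\end{equation*}
and concludes that the only source of approximation is (\ref{eq:6.103}), so that the reduction would be exact when $\sigma(x)=0$. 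As you point out, relation (\ref{eq:6.101}) applied with $B(\xi)=G(\xi,m_{i},DU(\xi,m_{i},t))$ produces the right-hand side with $G$ and $DU$ evaluated at the measure $m_{i}$, whereas the Nash system (\ref{eq:6.8}) carries them at $m_{j}=\frac{1}{N-1}\sum_{k\neq j}\delta_{x^{k}}$ (note that $D_{x^{j}}u^{j}=D_{x}U(x^{j},m_{j},t)$ is intrinsically tied to $m_{j}$); since $m_{i}-m_{j}=\frac{1}{N-1}(\delta_{x^{j}}-\delta_{x^{i}})$, identifying the two requires a single-atom swap whose error is $O(1/N)$ under smoothness of $U$ and $G$ in $m$, and this step is not covered by Proposition \ref{prop:6.1}. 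Your version therefore records two sources of inexactness where the paper records one, and in particular corrects the paper's overstated claim that the reduction is exact in the degenerate case $\sigma=0$; both arguments deliver the stated conclusion, since the Proposition asserts only that (\ref{eq:6.8}) holds approximately.
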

\begin{proof}
We take $x=x^{i}$ and $m=\frac{\sum_{j\not=i}\delta_{x^{j}}(\cdot)}{N-1}$,
and we make use of Proposition \ref{prop:6.1}. We have the equivalences 
\begin{equation*}
\begin{split}
	\frac{\partial U(x^{i},\frac{1}{N-1}\sum_{j\not=i}\delta_{x^{j}},t)}{\partial t}&=\frac{\partial u^{i}(x,t)}{\partial t};\\
	\frac{1}{2}\text{tr}(a(x^{i})D_{x^{i}}^{2}U(x^{i},\frac{1}{N-1}\sum_{j\not=i}\delta_{x^{j}},t))&=\frac{1}{2}\text{tr}(a(x^{i})D_{x^{i}}^{2}u^{i}(x,t));\\
	\frac{1}{2}\beta^{2}\Delta_{x^{i}}U(x^{i},\frac{1}{N-1}\sum_{j\not=i}\delta_{x^{j}},t)&=\frac{1}{2}\beta^{2}\Delta_{x^{i}}u^{i}(x,t);\\
	\int_{\mathbb{R}^{n}}\frac{1}{2}\text{tr}a(\xi)D_{\xi}^{2}\frac{\partial U(x^{i},m,t)}{\partial m}(\xi)\, m(\xi)d\xi&\sim\sum_{j\not=i}\frac{1}{2}\text{tr}(a(x^{j})D_{x^{j}}^{2}u^{i}(x,t));\\
	\int_{\mathbb{R}^{n}}D_{\xi}\frac{\partial U(x^{i},m,t)}{\partial m}(\xi).G(\xi,m,DU(\xi))\, m(\xi)d\xi&=\sum_{j\not=i}D_{x^{j}}u^{i}(x,t).G(x^{i},\frac{1}{N-1}\sum_{k\not=j}\delta_{x^{k}},D_{x^{j}}u^{j}(x,t));\\
	\frac{1}{2}\beta^{2}\Big(\int_{\mathbb{R}^{n}}\Delta_{\xi}\frac{\partial U(x^{i},m,t)}{\partial m}(\xi)m(\xi)d\xi+\int_{\mathbb{R}^{n}}&\int_{\mathbb{R}^{n}}\text{tr}D_{\xi}D_{\eta}(\frac{\partial^{2}U(x^{i},m,t)}{\partial m^{2}}(\xi,\eta))m(\xi)m(\eta)d\xi d\eta\Big)\\
	&=\frac{1}{2}\beta^{2}\sum_{j,k\not=i}\text{tr}(D_{x^{j}x^{k}}^{2}u^{j}(x,t));\\
	\beta^{2}\int_{\mathbb{R}^{n}}\text{tr}D_{x^{i}}D_{\xi}(\frac{\partial U(x^{i},m,t)}{\partial m}(\xi))m(\xi)d\xi&=\beta^{2}\sum_{j\not=i}\text{tr}(D_{x^{j}x^{i}}^{2}u^{j}(x,t));\\
	H(x^{i},m,D_{x^{i}}U(x^{i},m,t))&=H(x^{i},\frac{1}{N-1}\sum_{k\not=j}\delta_{x^{k}},D_{x^{i}}u^{i}(x^{i},t)).\\
\end{split}
\end{equation*}

Using these equivalences, we check easily that the Master equation
(\ref{eq:6.20}) with $x=x^{i}$ and $m=\frac{1}{N-1}\sum_{j\not=i}\delta_{x^{j}}$
reduces to the system (\ref{eq:6.8}) with functions $u^{i}(x,t)$
given by (\ref{eq:6.21}). It is an approximation, only because of
(\ref{eq:6.103}). The reduction is exact, when $\sigma(x)=0.$
\end{proof}

\section{APPLICATION TO SYSTEMIC RISK }

\subsection{THE PROBLEM }

We discuss here an application of \textit{Mean Field Games and Systemic Risk}
introduced by R. Carmona, J.P. Fouque, and L.H. Sun  \cite{CFS}. There
are $N$ players, and the state equations are given as follows:
\begin{equation}
\label{eq:7.1}
\left\{
\begin{array}{l}
dx^{i}(s)=\displaystyle\Big[\alpha(\frac{1}{N}\sum_{j=1}^{N}x^{j}(s)-x^{i}(s))+v^{i}(s)\Big]ds+\sigma dw^{i}(s)+\beta db(s),\; s>t,\\
x^{i}(t)=x^{i}.
\end{array}
\right. 
\end{equation}

Each player has the payoff 
\begin{equation}
\begin{split}
J^{i}(x,t;v(\cdot))&= \mathbb{E} \bigg[\int_{t}^{T}\{\frac{1}{2}(v^{i}(x(s)))^{2}-\lambda v^{i}(x(s))(\frac{1}{N}\sum_{j=1}^{N}x^{j}(s)-x^{i}(s))\label{eq:7.2}\\
&\qquad\qquad+\frac{\mu}{2}  (\frac{1}{N}\sum_{j=1}^{N}x^{j}(s)-x^{i}(s))^{2}\}ds+\frac{c}{2}(\frac{1}{N}\sum_{j=1}^{N}x^{j}(T)-x^{i}(T))^{2}\bigg].
\end{split}
\end{equation}

In \cite{CFS}, the players are banks lending and borrowing from each
other. The state $x^{i}$ represents the log-monetary reserve of bank
$i$. The term 
\[
\alpha(\frac{1}{N}\sum_{j=1}^{N}x^{j}(s)-x^{i}(s))=\frac{\alpha}{N}\sum_{j=1}^{N}(x^{j}(s)-x^{i}(s))
\]
 represents the sum of rates at which bank $i$ borrows or lends to
bank $j$, with $\alpha>0.$ The control $v^{i}$ represents the rate
of lending or borrowing to the central bank. The pay-off contains
a penalty term on the difference between each reserve level and the
average. The parameter $\lambda$ is positive, and the corresponding
term models an incentive to borrow ($v^{i}>0)$ when the reserve of
bank $i$ is smaller than the average, or to lend in the opposite
case. The players look for a Nash equilibrium.

\subsection{MEAN FIELD GAME APPROXIMATION }

The preceding problem is a particular case of the problem (\ref{eq:6.1}),
(\ref{eq:6.2}). So we can consider the mean field analogue. Using our notations 
\begin{equation}
\label{add_2}
	m_{1}=\int_{\mathbb{R}^{n}}m(\xi)d\xi=1,\: y=\int_{\mathbb{R}^{n}}\xi m(\xi)d\xi,
\end{equation}
we introduce the functions: 
\begin{equation}
\begin{split}
f(x,m,v)&=\frac{1}{2}v^{2}-\lambda v(y-v)+\dfrac{\mu}{2}(y-x)^{2},\\
g(x,m,v)&=\alpha(y-x)+v,\\
h(x,m)&=\frac{c}{2}(y-x)^{2}.\\
\end{split}
\end{equation}

It is then easy to check that 
\begin{equation*}
H(x,m,q)=\frac{\mu-\lambda^{2}}{2}(y-x)^{2}+q(\alpha+\lambda)(y-x)-\frac{1}{2}q^{2} , 
\end{equation*}
and
\begin{equation*}
G(x,m,q)=(\alpha+\lambda)(y-x)-q.
\end{equation*}

To preserve convexity, \cite{CFS} assume that $\mu-\lambda^{2}>0.$ The Master equation (\ref{eq:4.7}) under this setting becomes 
\begin{equation}
\label{eq:7.8}
\left\{
\begin{split}
	&-\frac{\partial U}{\partial t}-\frac{1}{2}(\sigma^{2}+\beta^{2})\frac{\partial^{2}U}{\partial x^{2}}-\frac{1}{2}(\sigma^{2}+\beta^{2})\int_{R}\frac{\partial^{2}}{\partial\xi^{2}}\frac{\partial U(x,m,t)}{\partial m}(\xi)m(\xi)d\xi\\
	&\qquad-\int_{R}\frac{\partial}{\partial\xi}\frac{\partial U(x,m,t)}{\partial m}(\xi)G(\xi,m,\frac{\partial}{\partial\xi}U(\xi,m,t))\, m(\xi)d\xi-\beta^{2}\int_{R}\frac{\partial}{\partial x}\frac{\partial}{\partial\xi}\frac{\partial U(x,m,t)}{\partial m}(\xi)m(\xi)d\xi\\
	&\qquad-\frac{1}{2}\beta^{2}\int_{R}\int_{R}\frac{\partial}{\partial\xi}\frac{\partial}{\partial\eta}\frac{\partial^{2}U(x,m,t)}{\partial m^{2}}(\xi,\eta)m(\xi)m(\eta)d\xi d\eta\\
	&=\frac{\mu-\lambda^{2}}{2}(y-x)^{2}+(\alpha+\lambda)(y-x)\frac{\partial U(x,m,t)}{\partial x}-\frac{1}{2}(\frac{\partial U(x,m,t)}{\partial x})^{2},\\
	&U(x,m,T)=\frac{c}{2}(y-x)^{2}.
\end{split}
\right.
\end{equation}

\subsection{SOLUTION}

To get an explicit solution of (\ref{eq:7.8}), we postulate that
\begin{equation}
U(x,m,t)=\frac{1}{2}(x-y)^{2}P(t)+R(m_{1},t).
\end{equation}

Clearly, we have
\begin{equation}
P(T)=c,\quad R(m_{1},T)=0
\end{equation}
from the terminal condition. An easy calculation, by using (\ref{eq:5.38}) to (\ref{eq:5.41}), leads to
\begin{equation*}
	\frac{dP}{dt}-2(\alpha+\lambda)P-P^{2}+\mu-\lambda^{2}=0
\end{equation*} 
and 
\begin{equation*}
	R(m_{1},t)=\frac{1}{2}(\sigma^{2}+\beta^{2}(m_{1}-1)^{2}) \int_{t}^{T}P(s)ds.
\end{equation*}

The corresponding FP equation becomes
\begin{equation*}
\left\{
\begin{split}
	&\partial_{t}m+(-\frac{1}{2}(\sigma^{2}+\beta^{2})\frac{\partial^{2}m}{\partial x^{2}}+(\alpha+\lambda+P(t))\frac{\partial}{\partial x}((y-x)m))dt+\beta\frac{\partial m}{\partial x}db(t)=0,\\
	&m(x,0)=m_{0}(x).
\end{split}
\right.
\end{equation*}

From the definition of $y$ in (\ref{add_2}), we have
\begin{equation}
y(t)=y_{0}+\beta b(t).
\end{equation}

The solution of the stochastic HJB equation (\ref{eq:4.3}) is then 

\begin{equation}
u(x,t)=u(x,m(t),t)=\frac{1}{2}(x-y_{0}-\beta b(t))^{2}P(t)+R(t) , \label{eq:7.14}
\end{equation}
 with $R(t)=R(1,t).$ We have $B(x,t)=(x-y(t)).$ We here recover the results in \cite{CFS}. 
\section*{References}

\end{document}